\newtheorem{thm}{Theorem}[section]
\newtheorem{cor}[thm]{Corollary}
\newtheorem{lem}[thm]{Lemma}
\theoremstyle{definition}
\theoremstyle{remark}
\newtheorem{rem}[thm]{Remark}
\theoremstyle{conclusion}
\theoremstyle{question}
\numberwithin{equation}{section}
\begin{document}
\title[PDEs involving higher-order fractional Laplacians]{Super poly-harmonic properties, Liouville theorems and classification of nonnegative solutions to equations involving higher-order fractional Laplacians}

\author{Daomin Cao, Wei Dai, Guolin Qin}

\address{Institute of Applied Mathematics, Chinese Academy of Sciences, Beijing 100190, and University of Chinese Academy of Sciences, Beijing 100049, P. R. China}
\email{dmcao@amt.ac.cn}

\address{School of Mathematics and Systems Science, Beihang University (BUAA), Beijing 100083, P. R. China, and LAGA, Universit\'{e} Paris 13 (UMR 7539), Paris, France}
\email{weidai@buaa.edu.cn}

\address{Institute of Applied Mathematics, Chinese Academy of Sciences, Beijing 100190, and University of Chinese Academy of Sciences, Beijing 100049, P. R. China}
\email{qinguolin18@mails.ucas.ac.cn}

\thanks{D. Cao was supported by NNSF of China (No. 11771469) and Chinese Academy of Sciences (No. QYZDJ-SSW-SYS021). W. Dai was supported by the NNSF of China (No. 11971049), the Fundamental Research Funds for the Central Universities and the State Scholarship Fund of China (No. 201806025011).}

\begin{abstract}
In this paper, we are concerned with equations \eqref{PDE} involving higher-order fractional Laplacians. By introducing a new approach, we prove the super poly-harmonic properties for nonnegative solutions to \eqref{PDE} (Theorem \ref{Thm0}). Our theorem seems to be the first result on this problem. As a consequence, we derive many important applications of the super poly-harmonic properties. For instance, we establish Liouville theorems, integral representation formula and classification results for nonnegative solutions to fractional higher-order equations \eqref{PDE} with general nonlinearities $f(x,u,Du,\cdots)$ including conformally invariant and odd order cases. In particular, our results completely improve the classification results for third order equations in Dai and Qin \cite{DQ1} by removing the assumptions on integrability. We also derive a characterization for $\alpha$-harmonic functions via averages in the appendix.
\end{abstract}
\maketitle {\small {\bf Keywords:} Super poly-harmonic properties; Higher-order fractional Laplacians; Conformally invariant equations; Nonnegative classical solutions; Classification of solutions; Liouville theorems.\\

{\bf 2010 MSC} Primary: 35R11; Secondary: 35C15, 35B53, 35B06.}

\section{Introduction}
\subsection{Background and setting of the problem}
In this paper, we mainly consider nonnegative classical solutions to the following equations involving higher-order fractional Laplacians
\begin{equation}\label{PDE}\\\begin{cases}
(-\Delta)^{m+\frac{\alpha}{2}}u(x)=f(x,u,Du,\cdots), \,\,\,\,\,\,\,\, x\in\mathbb{R}^{n}, \\
u\in C^{2m+[\alpha],\{\alpha\}+\epsilon}_{loc}\cap\mathcal{L}_{\alpha}(\mathbb{R}^{n}), \,\,\,\,\, u(x)\geq0, \,\,\,\,\, x\in\mathbb{R}^{n},
\end{cases}\end{equation}
where $n\geq2$, $1\leq m<+\infty$ is an integer, $0<\alpha<2$, $\epsilon>0$ is arbitrarily small, $[\alpha]$ denotes the integer part of $\alpha$, $\{\alpha\}:=\alpha-[\alpha]$, the higher-order fractional Laplacians $(-\Delta)^{m+\frac{\alpha}{2}}:=(-\Delta)^{m}(-\Delta)^{\frac{\alpha}{2}}$ and nonlinearity $f(x,u,Du,\cdots)\geq0$ is an arbitrary nonnegative function (may depend on $x$, $u$ and derivatives of $u$) which is continuous with respect to $x\in\mathbb{R}^{n}$.

For any $u\in C^{[\alpha],\{\alpha\}+\epsilon}_{loc}(\mathbb{R}^{n})\cap\mathcal{L}_{\alpha}(\mathbb{R}^{n})$, the nonlocal operator $(-\Delta)^{\frac{\alpha}{2}}$ ($0<\alpha<2$) is defined by (see \cite{CFY,CLL,DQ1,DQ2,S,ZCCY})
\begin{equation}\label{nonlocal defn}
  (-\Delta)^{\frac{\alpha}{2}}u(x)=C_{n,\alpha} \, P.V.\int_{\mathbb{R}^n}\frac{u(x)-u(y)}{|x-y|^{n+\alpha}}dy:=C_{n}\lim_{\varepsilon\rightarrow0}\int_{|y-x|\geq\varepsilon}\frac{u(x)-u(y)}{|x-y|^{n+\alpha}}dy,
\end{equation}
where the function space
\begin{equation}\label{0-1}
  \mathcal{L}_{\alpha}(\mathbb{R}^{n}):=\Big\{u: \mathbb{R}^{n}\rightarrow\mathbb{R}\,\big|\,\int_{\mathbb{R}^{n}}\frac{|u(x)|}{1+|x|^{n+\alpha}}dx<\infty\Big\}.
\end{equation}
The fractional Laplacians $(-\Delta)^{\frac{\alpha}{2}}$ can also be defined equivalently (see \cite{CLM}) by Caffarelli and Silvestre's extension method (see \cite{CS}) for $u\in C^{[\alpha],\{\alpha\}+\epsilon}_{loc}(\mathbb{R}^{n})\cap\mathcal{L}_{\alpha}(\mathbb{R}^{n})$. Throughout this paper, we define $(-\Delta)^{m+\frac{\alpha}{2}}u:=(-\Delta)^{m}(-\Delta)^{\frac{\alpha}{2}}u$ for $u\in C^{2m+[\alpha],\{\alpha\}+\epsilon}_{loc}(\mathbb{R}^{n})\cap\mathcal{L}_{\alpha}(\mathbb{R}^{n})$, where $(-\Delta)^{\frac{\alpha}{2}}u$ is defined by definition \eqref{nonlocal defn}. Due to the nonlocal feature of $(-\Delta)^{\frac{\alpha}{2}}$, we need to assume  $u\in C^{2m+[\alpha],\{\alpha\}+\epsilon}_{loc}(\mathbb{R}^{n})$ with arbitrarily small $\epsilon>0$ (merely $u\in C^{2m+[\alpha],\{\alpha\}}$ is not enough) to guarantee that $(-\Delta)^{\frac{\alpha}{2}}u\in C^{2m}(\mathbb{R}^{n})$ (see \cite{CLM,S}), and hence $u$ is a classical solution to equation \eqref{PDE} in the sense that $(-\Delta)^{m+\frac{\alpha}{2}}u$ is pointwise well-defined and continuous in the whole $\mathbb{R}^{n}$.

For $0<\gamma<+\infty$, PDEs of the form
\begin{equation}\label{GPDE}
  (-\Delta)^{\frac{\gamma}{2}}u(x)=f(x,u,\cdots)
\end{equation}
have numerous important applications in conformal geometry and Sobolev inequalities, which also model many phenomena in mathematical physics, astrophysics, probability and finance (see \cite{CDQ,CLL,CLM,CS,CY,DFHQW,Lin,WX,Xu} and the references therein). We say that equation \eqref{GPDE} is in critical order if $\gamma=n$, is in sub-critical order if $0<\gamma<n$ and is in super-critical order if $n<\gamma<+\infty$.

\subsection{Super poly-harmonic properties of nonnegative solutions}
First, we will investigate the super poly-harmonic properties of nonnegative solutions to \eqref{PDE}. It is well known that the super poly-harmonic properties of nonnegative solutions play a crucial role in establishing the integral representation formulae, Liouville type theorems and classification of solutions to higher order PDEs in $\mathbb{R}^{n}$ or $\mathbb{R}^{n}_{+}$ (see \cite{CD,CDQ,CF,CFL,CL2,DL,DPQ,DQ0,DQ1,DQ3,DQ5,DQZ,Lin,LWX,WX} and the references therein).

For integer higher-order equations (i.e., $\alpha=0$ in \eqref{PDE}), the super poly-harmonic properties of nonnegative solutions usually can be derived via the ``spherical average, re-centers and iteration" arguments in conjunction with careful ODE analysis (we refer to \cite{CFL,Lin,LWX,WX}, see also \cite{CDQ,CF,CL2,DQ0,DQ3,DQ5} and the references therein). However, \emph{for the fractional higher-order equation \eqref{PDE}, so far there is no result on the super poly-harmonic properties.} The reason for this is that $(-\Delta)^{\frac{\alpha}{2}}$ is nonlocal and $(-\Delta)^{\frac{\alpha}{2}}f(r)$ can not be calculated or expanded accurately ($0<\alpha<2$ and $f(r)$ is a radially symmetric function), thus the strategy for integer higher-order equations does not work any more for equation \eqref{PDE} involving higher-order fractional Laplacians. To overcome these difficulties we need to implement new ideas and arguments. In this paper, by taking full advantage of the Poisson representation formulae for $(-\Delta)^{\frac{\alpha}{2}}$ and developing some new integral estimates on the average $\int_{R}^{+\infty}\frac{R^{\alpha}}{r(r^{2}-R^{2})^{\frac{\alpha}{2}}}\bar{u}(r)dr$ and iteration techniques, we will introduce a new approach to overcome these difficulties and establish the super poly-harmonic properties of nonnegative classical solutions to \eqref{PDE} (see Section 2). Our theorem seems to be the first result on this problem.
\begin{thm}\label{Thm0}
Assume $n\geq2$, $m\geq 1$, $0<\alpha<2$ and $f\geq0$ is continuous w.r.t. $x\in\mathbb{R}^{n}$. Suppose that $u$ is a nonnegative classical solution to \eqref{PDE}. Then, we have, for every $i=0,1,\cdots,m-1$,
\begin{equation}\label{0-2}
  (-\Delta)^{i+\frac{\alpha}{2}}u(x)\geq0, \qquad \forall \,\, x\in\mathbb{R}^{n}.
\end{equation}
\end{thm}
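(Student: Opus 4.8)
The plan is to argue by contradiction, combining a spherical–average ODE analysis with the Poisson/Green representation for $(-\Delta)^{\frac{\alpha}{2}}$ on balls. Set $v_i:=(-\Delta)^{i+\frac{\alpha}{2}}u$ for $i=0,\dots,m$, so that $v_m=f\ge0$, $-\Delta v_i=v_{i+1}$ for $0\le i\le m-1$, and, by the regularity assumed on $u$, $v_0\in C^{2m}(\mathbb{R}^n)$ while $v_i\in C^2(\mathbb{R}^n)$ for $i\le m-1$. Suppose \eqref{0-2} fails, and let $i_0\in\{0,\dots,m-1\}$ be the largest index for which $v_{i_0}$ is negative at some point $x_0$; after translation take $x_0=0$, and write $\bar g(r)$ for the average of $g$ over $\partial B_r(0)$. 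Since $v_j\ge0$ for every $j>i_0$ (including $v_m=f$), $v_{i_0}$ is superharmonic, so $r^{n-1}\bar v_{i_0}'(r)$ is nonincreasing and vanishes as $r\to0^+$, hence $\bar v_{i_0}(r)\le\bar v_{i_0}(0)=:-\delta<0$ for all $r\ge0$. Feeding this into the radial identities $-(r^{n-1}\bar v_i')'=r^{n-1}\bar v_{i+1}$ and integrating twice, successively for $i=i_0-1,i_0-2,\dots,0$, one obtains that for large $r$ each $\bar v_{i_0-k}(r)$ ($1\le k\le i_0$) has sign $(-1)^{k+1}$ and grows at least like $r^{2k}$ in absolute value. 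Taking $k=i_0$ this yields: $\bar v_0\le-\delta$ if $i_0=0$; $\bar v_0(r)\to+\infty$, of order $r^{2i_0}$, if $i_0$ is odd; and $\bar v_0(r)\to-\infty$, of order $-r^{2i_0}$, if $i_0$ is even and $\ge2$.

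Next I would invoke the representation formula: since $v_0=(-\Delta)^{\frac{\alpha}{2}}u\in C^{2m}$ and $u\in\mathcal{L}_{\alpha}$, for every $R>0$,
\[ u(0)=\int_{B_R}G_R(0,y)\,v_0(y)\,dy+\int_{\mathbb{R}^n\setminus B_R}P_R(0,y)\,u(y)\,dy=:I(R)+A(R), \]
where $G_R\ge0$ is the Green function of $(-\Delta)^{\frac{\alpha}{2}}$ on $B_R:=B_R(0)$ and $P_R(0,y)=c_{n,\alpha}R^\alpha(|y|^2-R^2)^{-\frac{\alpha}{2}}|y|^{-n}$ on $B_R^c$ (the exterior integral converges because $u\in\mathcal{L}_{\alpha}$). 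In polar coordinates $A(R)=c_{n}\int_R^{+\infty}\frac{R^\alpha}{r(r^2-R^2)^{\alpha/2}}\,\bar u(r)\,dr\ge0$, which is exactly the average named in the statement; the substitution $r=R\sigma$ rewrites it as $\int_1^{\infty}\bar u(R\sigma)\,d\mu_{\mathrm{ext}}(\sigma)$ for a probability measure $\mu_{\mathrm{ext}}$ on $(1,\infty)$ independent of $R$. Since $G_R(0,\cdot)$ is radially symmetric about the center, the scaling $G_R(0,y)=R^{\alpha-n}G_1(0,y/R)$ gives $I(R)=R^\alpha\int_0^1\bar v_0(R\rho)\,d\mu_{\mathrm{int}}(\rho)$ for a fixed positive measure $\mu_{\mathrm{int}}$ on $(0,1)$ of total mass $c_0=\int_{B_1}G_1(0,y)\,dy>0$.

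Then I would close each case. If $i_0$ is odd, $\bar v_0(R\rho)\to+\infty$ forces $I(R)\to+\infty$, so $u(0)\ge I(R)\to+\infty$, impossible. If $i_0$ is even and $\ge2$, then $\bar v_0(s)\le-cs^{2i_0}+C$ for large $s$ while $\bar v_0$ is bounded above, whence $I(R)\le-c'R^{\alpha+2i_0}+C'R^\alpha$ and therefore $A(R)=u(0)-I(R)\ge\frac{c'}{2}R^{\alpha+2i_0}$ for all large $R$. If $i_0=0$, then $\bar v_0\le-\delta$ gives $I(R)\le-\delta c_0R^\alpha$, hence $A(R)\ge\delta c_0R^\alpha$ for all $R>0$. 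In the last two cases I would contradict $u\in\mathcal{L}_{\alpha}$, i.e.\ $K:=\int_1^{\infty}r^{-1-\alpha}\bar u(r)\,dr<\infty$: write the forced lower bound as $A(R)\ge cR^{\alpha+2\ell}$ for $R\ge R_0$ (with $\ell=i_0\ge0$), multiply by $R^{\alpha-1}$, and integrate over $R\in(R_0,T)$. The right-hand side is $\gtrsim T^{2\alpha+2\ell}$; on the left, Fubini's theorem turns $\int_{R_0}^{T}R^{\alpha-1}A(R)\,dR$ into an integral of $\bar u(r)$ against an explicit kernel, and splitting the $r$-integration at an intermediate scale (e.g.\ $\sqrt{T}$) — so as to exploit that $\int_S^{\infty}r^{-1-\alpha}\bar u(r)\,dr\to0$ — shows this is $o(T^{2\alpha+2\ell})$, a contradiction. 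Thus all three cases are impossible, and \eqref{0-2} follows.

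The step I expect to be the main obstacle is precisely this last integral estimate. Because $(-\Delta)^{\frac{\alpha}{2}}$ is nonlocal, there is no differential equation for $\bar u$, so the only handle on $u$ beyond $u\ge0$ is the exterior average $A(R)=\int_R^{+\infty}\frac{R^\alpha}{r(r^2-R^2)^{\alpha/2}}\bar u(r)\,dr$, and one must show that a nonnegative $u\in\mathcal{L}_{\alpha}$ cannot keep $A(R)$ of size $\gtrsim R^\alpha$ — let alone growing like $R^{\alpha+2i_0}$. The subtlety is that $\mathcal{L}_{\alpha}$ only barely forbids this: the naive bounds lose a logarithmic factor, so one is forced to iterate — for instance, first bootstrapping a polynomial upper bound for $\bar u$ from the representation formula together with the already-established polynomial control of $\bar v_0,\dots,\bar v_{i_0}$, then re-inserting it into $A(R)$ to close the estimate. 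This is exactly where the new integral estimates on $\int_R^{+\infty}\frac{R^\alpha}{r(r^2-R^2)^{\alpha/2}}\bar u(r)\,dr$ and the iteration techniques enter; the remaining ingredients (the radial ODE analysis and the bookkeeping with $G_R$, $P_R$) are routine by comparison.
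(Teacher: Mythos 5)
Your argument follows the paper's proof essentially step for step: define $v_i=(-\Delta)^{i+\alpha/2}u$, take the largest $i_0$ where $v_{i_0}$ goes negative, derive the sign and polynomial growth of $\bar v_0$ by iterating the radial ODE for the intermediate averages, and then feed this into the Green--Poisson representation $u(x_0)=I(R)+A(R)$ on $B_R(x_0)$ to contradict $u\in\mathcal L_\alpha$ via a Fubini computation. The only cosmetic difference is in the last step: where the paper multiplies $A(R)$ by $R^{-1-\alpha-\delta}$ (for even $i_0\ge 2$) or truncates the exterior average at $2R$ and multiplies by $R^{-1-\alpha}$ (for $i_0=0$), you unify both cases by multiplying by $R^{\alpha-1}$, integrating over $(R_0,T)$, and splitting the $r$-integration at an intermediate scale — the same Fubini estimate read from the other side. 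Both versions close the argument, so this is the same proof.
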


Now suppose the nonlinearity $f(x,u,Du,\cdots)\leq0$ in \eqref{PDE} is an arbitrary function (may depend on $x$, $u$ and derivatives of $u$) which is continuous with respect to $x\in\mathbb{R}^{n}$, by using the ideas in proving the super poly-harmonic properties in Theorem \ref{Thm0}, we can derive the following sub poly-harmonic properties of nonnegative classical solutions to \eqref{PDE}.
\begin{thm}\label{Thm1}
Assume $n\geq2$, $m\geq 1$, $0<\alpha<2$ and $f\leq0$ is continuous w.r.t. $x\in\mathbb{R}^{n}$. Suppose that $u$ is a nonnegative classical solution to \eqref{PDE}. Then, we have, for every $i=0,1,\cdots,m-1$,
\begin{equation}\label{0-3}
  (-\Delta)^{i+\frac{\alpha}{2}}u(x)\leq0, \qquad \forall \,\, x\in\mathbb{R}^{n}.
\end{equation}
\end{thm}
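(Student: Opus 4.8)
The plan is to rerun the proof of Theorem \ref{Thm0} with every sign involving $f$ reversed, the hypothesis $u\ge 0$ being used in exactly the same way. Write $w_{0}:=(-\Delta)^{\frac{\alpha}{2}}u\in C^{2m}(\mathbb{R}^{n})$ and, for $i=0,1,\dots,m$, $w_{i}:=(-\Delta)^{i}w_{0}=(-\Delta)^{i+\frac{\alpha}{2}}u$, so that $-\Delta w_{i}=w_{i+1}$ for $0\le i\le m-1$, $w_{m}=f\le 0$, and we must show $w_{i}\le 0$ for $i=0,\dots,m-1$. Since replacing $u$ by $u(\cdot+x_{0})$ preserves $f\le 0$, it suffices to prove $w_{i}(0)\le 0$, and for this I pass to the spherical averages $\overline{w_{i}}(r):=\frac{1}{|\partial B_{r}|}\int_{\partial B_{r}}w_{i}\,d\sigma$, which are continuous, satisfy $\overline{w_{i}}(0^{+})=w_{i}(0)$, and obey the radial cascade $\big(r^{n-1}\overline{w_{i}}'(r)\big)'=-r^{n-1}\,\overline{w_{i+1}}(r)$ for $0\le i\le m-1$ together with $\overline{w_{m}}=\overline{f}\le 0$.

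First I treat the local part of the tower as in the polyharmonic literature (see \cite{CFL,Lin,LWX,WX}), but with the monotonicity reversed. I argue downward by induction on $k=m,m-1,\dots,1$, the assertion at stage $k$ being $\overline{w_{k-1}}\le 0$ on $(0,\infty)$. Assuming this for all larger indices, $\overline{w_{k-1}}$ is nondecreasing: for $k\le m-1$ because $\overline{w_{k-1}}'(r)=-r^{1-n}\int_{0}^{r}\overline{w_{k}}(s)s^{n-1}\,ds\ge 0$ by the induction hypothesis, and for $k=m$ because $\Delta w_{m-1}=-f\ge 0$ makes $w_{m-1}$ subharmonic. If $\overline{w_{k-1}}(r_{0})>0$ for some $r_{0}$, then $\overline{w_{k-1}}\ge 2\delta>0$ on $[r_{0},\infty)$ for some $\delta>0$, and integrating the cascade downward forces $(-1)^{j}\overline{w_{k-1-j}}(r)\gtrsim\delta\,r^{2j}$ for large $r$; in particular $\overline{w_{0}}(r)=\overline{(-\Delta)^{\frac{\alpha}{2}}u}(r)$ acquires the definite sign $(-1)^{k-1}$ and grows at least like $\delta\,r^{2(k-1)}$.

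It remains to contradict such growth, and this nonlocal endpoint is where the ingredients of Theorem \ref{Thm0} enter. By the Poisson representation formula for $(-\Delta)^{\frac{\alpha}{2}}$ on $B_{R}$ established in Section 2,
\[
u(0)=\int_{\mathbb{R}^{n}\setminus B_{R}}P_{R}(0,y)u(y)\,dy+\int_{B_{R}}G_{R}(0,y)w_{0}(y)\,dy,
\]
with $P_{R},G_{R}\ge 0$ and, after integrating in the angular variable, $\int_{\mathbb{R}^{n}\setminus B_{R}}P_{R}(0,y)u(y)\,dy=c_{n,\alpha}\int_{R}^{+\infty}\frac{R^{\alpha}}{r(r^{2}-R^{2})^{\frac{\alpha}{2}}}\,\overline{u}(r)\,dr$, which is $\ge 0$ since $u\ge 0$; hence $\int_{B_{R}}G_{R}(0,y)w_{0}(y)\,dy\le u(0)$ for every $R>0$. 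Scaling of $G_{R}$ gives $\int_{B_{R}}G_{R}(0,y)|y|^{2\ell}\,dy=c_{\ell}R^{\alpha+2\ell}$ with $c_{\ell}>0$, while $\int_{B_{R_{0}}}G_{R}(0,y)\big(1+|y|^{2\ell}+|w_{0}(y)|\big)\,dy$ stays bounded as $R\to\infty$ by domination by the Riesz kernel $|y|^{\alpha-n}$. In the ``good parity'' one has $\overline{w_{0}}(r)\gtrsim\delta r^{2\ell}$ for large $r$ (this includes the case $\ell=0$), and these facts force $\int_{B_{R}}G_{R}(0,y)w_{0}(y)\,dy\to+\infty$, contradicting $\le u(0)$. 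In the opposite parity $\overline{w_{0}}(r)\lesssim-\delta r^{2\ell}$ with $\ell\ge 1$, whence $\int_{B_{R}}G_{R}(0,y)w_{0}(y)\,dy\le C-\delta c_{\ell}R^{\alpha+2\ell}$, so the exterior Poisson integral $\mathcal A(R):=c_{n,\alpha}\int_{R}^{+\infty}\frac{R^{\alpha}}{r(r^{2}-R^{2})^{\frac{\alpha}{2}}}\overline{u}(r)\,dr=u(0)-\int_{B_{R}}G_{R}(0,y)w_{0}(y)\,dy$ obeys $\mathcal A(R)\ge\delta c_{\ell}R^{\alpha+2\ell}-C$ for large $R$; this is ruled out by the new integral estimate for that average, obtained by a Fubini computation that reduces the bound to the convergence of $\int_{0}^{\pi/2}\tan^{\alpha-1}\phi\,d\phi$ (this is precisely where $0<\alpha<2$ is used) and yields $\int_{1}^{+\infty}\mathcal A(R)\,R^{-1-\alpha-\eps}\,dR<+\infty$ for every $\eps>0$ because $u\in\mathcal L_{\alpha}$, hence $\liminf_{R\to+\infty}\mathcal A(R)R^{-\alpha-\eps}=0$ for $0<\eps<2\ell$. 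In all cases we obtain a contradiction, so $\overline{w_{k-1}}\le 0$; this closes the induction, gives $w_{i}(0)\le 0$, and then $w_{i}\le 0$ on $\mathbb{R}^{n}$ by translation. The main obstacle is exactly this last estimate: reconciling the forced polynomial growth of $\overline{(-\Delta)^{\frac{\alpha}{2}}u}$ with $u\ge 0$ and $u\in\mathcal L_{\alpha}$ despite the near-diagonal singularity of the weight $\frac{R^{\alpha}}{r(r^{2}-R^{2})^{\alpha/2}}$; everything else is the sign-reversed mirror of the proof of Theorem \ref{Thm0}.
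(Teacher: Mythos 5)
Your proposal is correct and follows essentially the same route as the paper: the cascade of spherical averages to force polynomial growth in $\overline{(-\Delta)^{\alpha/2}u}$, the Green--Poisson representation on balls centered at the point under consideration, and the Fubini/$\mathcal{L}_\alpha$ estimate on the average $\mathcal{A}(R)$ to extract the contradiction. Your downward induction at a fixed center and your use of the $R^{-1-\alpha-\eps}$-weighted Fubini bound (as in the paper's (2.19)) are minor reorganizations of the paper's iteration ``$u_m\le 0$, then $u_{m-1}\le 0$, \dots'' and its $\int_R^{2R}$ splitting, not a genuinely different argument.
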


\subsection{Liouville theorems, integral representation formula and classification of nonnegative solutions}
In this subsection, by applying the super poly-harmonic properties in Theorem \ref{Thm0}, we will derive some important results on equations involving higher-order fractional Laplacians.

{\em (i)  Liouville theorem for fractional poly-harmonic functions in $\mathbb{R}^{n}$.}
\smallskip

Assume $u\geq0$ is a nonnegative fractional poly-harmonic functions in $\mathbb{R}^{n}$, that is,
\begin{equation}\label{0-4}
  (-\Delta)^{m+\frac{\alpha}{2}}u(x)=0, \qquad \forall \,\, x\in\mathbb{R}^{n},
\end{equation}
where $n\geq2$, $1\leq m<+\infty$ is an integer and $0<\alpha<2$.

As a consequence of the super poly-harmonic properties in Theorem \ref{Thm0} and the sub poly-harmonic properties in Theorem \ref{Thm1}, we deduce that $(-\Delta)^{i+\frac{\alpha}{2}}u\equiv0$ in $\mathbb{R}^{n}$ for every $i=0,\cdots,m-1$. In particular, one has $(-\Delta)^{\frac{\alpha}{2}}u\equiv0$ in $\mathbb{R}^{n}$, and hence from the Liouville theorem for fractional Laplacians $(-\Delta)^{\frac{\alpha}{2}}$ with $0<\alpha<2$ in \cite{BKN,ZCCY}, it follows that $u\equiv C$ in $\mathbb{R}^{n}$ for some nonnegative constant $C\geq0$. Therefore, we have the following Liouville theorem for fractional poly-harmonic functions in $\mathbb{R}^{n}$.
\begin{thm}\label{Thm4}
Assume $n\geq2$, $m\geq1$ and $0<\alpha<2$. Suppose $u$ is a nonnegative fractional poly-harmonic functions in $\mathbb{R}^{n}$ satisfying \eqref{0-4}, then $u\equiv C\geq0$ in $\mathbb{R}^{n}$.
\end{thm}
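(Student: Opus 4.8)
The plan is to combine the two poly-harmonic inequalities already available with a known Liouville theorem for the pure fractional Laplacian. First I would apply Theorem \ref{Thm0} to the solution $u$ of \eqref{0-4}: since here $f\equiv0\geq0$, $u$ is in particular a nonnegative classical solution to \eqref{PDE}, so for every $i=0,1,\dots,m-1$ we get $(-\Delta)^{i+\frac{\alpha}{2}}u(x)\geq0$ for all $x\in\mathbb{R}^n$. Then I would apply Theorem \ref{Thm1} to the same $u$: since $f\equiv0\leq0$ as well, the hypotheses of Theorem \ref{Thm1} are also satisfied, giving $(-\Delta)^{i+\frac{\alpha}{2}}u(x)\leq0$ for all $x$ and all $i=0,\dots,m-1$. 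Combining the two, $(-\Delta)^{i+\frac{\alpha}{2}}u\equiv0$ in $\mathbb{R}^n$ for every $i=0,\dots,m-1$.

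Next I would extract from the case $i=0$ the identity $(-\Delta)^{\frac{\alpha}{2}}u\equiv0$ in $\mathbb{R}^n$. Now $u$ is a nonnegative function in $\mathcal{L}_\alpha(\mathbb{R}^n)$ (this membership is built into the definition of a classical solution to \eqref{PDE}, hence into \eqref{0-4}) which is $\alpha$-harmonic in the whole space. At this point I would invoke the Liouville theorem for $(-\Delta)^{\frac{\alpha}{2}}$, $0<\alpha<2$, from \cite{BKN,ZCCY}: a nonnegative (or, more generally, one-side bounded) $\alpha$-harmonic function on $\mathbb{R}^n$ must be constant. This yields $u\equiv C$ for some constant $C$, and $C\geq0$ because $u\geq0$, which is exactly the assertion of Theorem \ref{Thm4}.

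The only genuine content beyond citing these three ingredients is checking that the regularity required to apply both Theorems \ref{Thm0} and \ref{Thm1} (namely $u\in C^{2m+[\alpha],\{\alpha\}+\epsilon}_{loc}\cap\mathcal{L}_\alpha$, so that all the iterated operators $(-\Delta)^{i+\frac{\alpha}{2}}u$ are pointwise well-defined and continuous) is already part of the standing hypothesis on solutions of \eqref{0-4}; this is immediate from the convention fixed in the introduction. I do not expect any serious obstacle here: the argument is essentially a two-line deduction once Theorems \ref{Thm0}, \ref{Thm1} and the scalar Liouville theorem are in hand. If anything, the one point worth a sentence is that applying Theorem \ref{Thm1} to $f\equiv 0$ is legitimate — $f\equiv0$ satisfies both $f\geq0$ and $f\leq0$ — so that the super- and sub-poly-harmonic inequalities can be used simultaneously to force the intermediate quantities to vanish identically.
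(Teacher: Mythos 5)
Your proposal is correct and coincides with the paper's own argument: the authors likewise apply Theorem \ref{Thm0} and Theorem \ref{Thm1} to $f\equiv0$ to conclude $(-\Delta)^{i+\frac{\alpha}{2}}u\equiv0$ for all $i$, then invoke the Liouville theorem for $\alpha$-harmonic functions from \cite{BKN,ZCCY} to deduce $u\equiv C\geq0$. Nothing is missing.
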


{\em (ii)  Subcritical order cases $2m+\alpha<n$.}
\smallskip

Equation \eqref{PDE} is closely related to the following integral equation
\begin{equation}\label{IE}
  u(x)=\int_{\mathbb{R}^{n}}\frac{R_{2m+\alpha,n}}{|x-y|^{n-2m-\alpha}}f(y,u(y),\cdots)dy,
\end{equation}
where the Riesz potential's constants $R_{\gamma,n}:=\frac{\Gamma(\frac{n-\gamma}{2})}{\pi^{\frac{n}{2}}2^{\gamma}\Gamma(\frac{\gamma}{2})}$ for $0<\gamma<n$ (see \cite{Stein}).

From the super poly-harmonic properties of nonnegative solutions in Theorem \ref{Thm0}, by using the methods in \cite{CFY,ZCCY}, we can deduce the following equivalence between PDEs \eqref{PDE} and IEs \eqref{IE}.
\begin{thm}\label{equivalence}
Assume $2m+\alpha<n$, $m\geq1$, $0<\alpha<2$ and $f\geq0$ is continuous w.r.t. $x\in\mathbb{R}^{n}$. Suppose that $u$ is a nonnegative classical solution to \eqref{PDE}, then $u$ is also a nonnegative solution to integral equation \eqref{IE}, and vice versa.
\end{thm}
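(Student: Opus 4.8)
\emph{Proof proposal.} I start with the soft direction ``IE $\Rightarrow$ PDE''. If $u$ solves \eqref{IE}, then $u=I_{2m+\alpha}g$ is the Riesz potential of order $2m+\alpha$ of $g(\cdot):=f(\cdot,u,Du,\cdots)\ge0$, which is continuous in $x$; since the kernel is nonnegative we get $u\ge0$, and since $R_{2m+\alpha,n}|x|^{2m+\alpha-n}$ is the fundamental solution of $(-\Delta)^{m+\frac{\alpha}{2}}=(-\Delta)^{m}(-\Delta)^{\frac{\alpha}{2}}$ (equivalently, $I_{2m+\alpha}$ inverts the Fourier multiplier $|\xi|^{2m+\alpha}$), standard regularity theory for Riesz potentials together with the smoothing that $g$ inherits through its dependence on $u$ yields $u\in C^{2m+[\alpha],\{\alpha\}+\epsilon}_{loc}\cap\mathcal L_{\alpha}(\mathbb R^{n})$ and the pointwise identity $(-\Delta)^{m+\frac{\alpha}{2}}u=g$; hence $u$ is a nonnegative classical solution of \eqref{PDE}.

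For ``PDE $\Rightarrow$ IE'' the plan is to peel off the Laplacians one at a time. Put $v_{k}:=(-\Delta)^{k+\frac{\alpha}{2}}u$ for $k=0,\dots,m-1$ and $v_{m}:=f$; by Theorem \ref{Thm0} every $v_{k}\ge0$, by construction $-\Delta v_{k}=v_{k+1}$ for $0\le k\le m-1$, and all the $v_{k}$ are continuous. The basic step is that a nonnegative continuous $v$ with $-\Delta v=h\ge0$ continuous satisfies $v(x)\ge\int_{B_{R}}G_{B_{R}}(x,y)h(y)\,dy$ on every ball $B_{R}$ (Green's representation on $B_{R}$, the minimum principle, and $v\ge0$ on $\partial B_{R}$); letting $R\to\infty$, the Dirichlet Green's functions of $B_{R}$ increase to the Newtonian kernel $R_{2,n}|x-y|^{2-n}$, so by monotone convergence $v(x)\ge\int_{\mathbb R^{n}}\frac{R_{2,n}}{|x-y|^{n-2}}h(y)\,dy$; in particular this potential is finite, $v$ minus it is nonnegative and harmonic on $\mathbb R^{n}$, hence a constant by Liouville. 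Applying this successively to $v_{m-1},\dots,v_{0}$, the constant produced at each level $k\ge1$ must vanish — otherwise the potential at level $k-1$ would diverge, $|z|^{2-n}$ being non-integrable at infinity — while the Riesz composition formula $\int_{\mathbb R^{n}}\frac{R_{a,n}}{|x-y|^{n-a}}\frac{R_{b,n}}{|y-z|^{n-b}}\,dy=\frac{R_{a+b,n}}{|x-z|^{n-a-b}}$ (valid for $a+b\le 2m<2m+\alpha<n$) compounds the kernels. After these $m$ steps one gets $(-\Delta)^{\frac{\alpha}{2}}u(x)=v_{0}(x)=\int_{\mathbb R^{n}}\frac{R_{2m,n}}{|x-y|^{n-2m}}f(y)\,dy+c_{0}$ with $c_{0}\ge0$ and the integral finite for every $x$.

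It remains to invert $(-\Delta)^{\frac{\alpha}{2}}$, which I do by the same scheme using the Green's function $G^{\alpha}_{B_{R}}$ and exterior Poisson kernel of $(-\Delta)^{\frac{\alpha}{2}}$ on balls (as in \cite{CFY,ZCCY}): for $x\in B_{R}$ one has $u(x)=\int_{B_{R}}G^{\alpha}_{B_{R}}(x,y)v_{0}(y)\,dy+\int_{\mathbb R^{n}\setminus B_{R}}P^{\alpha}_{B_{R}}(x,y)u(y)\,dy$, where the last term is nonnegative and, as $R\to\infty$, decreases while the first term increases to $\int_{\mathbb R^{n}}\frac{R_{\alpha,n}}{|x-y|^{n-\alpha}}v_{0}(y)\,dy$. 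Finiteness of the latter forces $c_{0}=0$ ($|z|^{\alpha-n}$ being non-integrable at infinity), so $v_{0}=\int\frac{R_{2m,n}}{|x-y|^{n-2m}}f$, and the decreasing limit $u-\int\frac{R_{\alpha,n}}{|x-y|^{n-\alpha}}v_{0}$ is a nonnegative $\alpha$-harmonic function lying in $\mathcal L_{\alpha}$ (being a limit of the $\alpha$-harmonic exterior-Poisson integrals), hence a constant $C\ge0$ by the Liouville theorem for $(-\Delta)^{\frac{\alpha}{2}}$ of \cite{BKN,ZCCY} — alternatively, by Theorem \ref{Thm4} applied to it. A final Riesz composition ($2m+\alpha<n$) identifies $\int\frac{R_{\alpha,n}}{|x-y|^{n-\alpha}}v_{0}$ with the right-hand side of \eqref{IE}, so $u$ equals that right-hand side plus $C$, and \eqref{IE} follows once $C$ is eliminated.

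The points I expect to demand the most care are: (i) the fractional inversion — this relies on the explicit Green's function and exterior Poisson kernel of $(-\Delta)^{\frac{\alpha}{2}}$ on balls, their nonnegativity and monotone convergence to the Riesz kernel, and the legitimacy of the ball representation for $u$ in the class $C^{2m+[\alpha],\{\alpha\}+\epsilon}_{loc}\cap\mathcal L_{\alpha}$, which is exactly where $(-\Delta)^{\frac{\alpha}{2}}u\in C^{2m}$ is used; (ii) in the converse direction, the regularity bootstrap from ``$u=I_{2m+\alpha}g$ with $g$ continuous'' to $u\in C^{2m+[\alpha],\{\alpha\}+\epsilon}_{loc}$ with $(-\Delta)^{m+\frac{\alpha}{2}}u=g$ pointwise — obtained by iterating the smoothing of $I_{2}$ ($m$ times) and then of $I_{\alpha}$ and using that $g$ inherits regularity from $u$; and (iii) the constant $C$: the above pins $u$ down only up to this additive constant, which is tied to the vanishing of the boundary contribution in the ball representation and is not removed by the Liouville theorems by themselves, so it must be disposed of using the decay/structural conventions under which \eqref{IE} is posed. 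Apart from these, the argument is bookkeeping of constants and repeated use of the Riesz composition identity.
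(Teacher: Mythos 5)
Your proposal follows the standard ``peel off Laplacians via Green representation on balls, pass to the limit, invert $(-\Delta)^{\frac{\alpha}{2}}$, and compound with Riesz composition'' route, which is precisely the argument the paper defers to (the paper omits the proof and cites \cite{CFY,ZCCY} in Remark~\ref{remark1}). Your handling of the intermediate constants $c_{0},\dots$ at each integer level is correct: if any of them were positive, the Riesz potential at the next level down would diverge because $|z|^{2-n}$, and eventually $|z|^{\alpha-n}$, fail to be integrable at infinity, so all of them are forced to vanish.

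Your item~(iii) is not merely a stylistic worry — it is a genuine gap, and in fact a gap in the theorem as literally stated. After all the composition steps you obtain $u=I_{2m+\alpha}f+C$ with $C\geq 0$, and nothing in ``$f\geq 0$ continuous in $x$'' forces $C=0$: take $f\equiv 0$ and $u\equiv 1$, which solves \eqref{PDE} but not \eqref{IE}. What eliminates $C$ in \cite{CFY,ZCCY} and in the cases the authors actually care about (Remark~\ref{remark0}) is a growth/positivity coupling between $f$ and $u$: e.g.\ for $f=|x|^{a}u^{p}$ with $p>0$, $a\geq 0$, the bound $u\geq C>0$ everywhere would give $\int_{\mathbb{R}^{n}}\frac{f(y,\cdots)}{|y|^{n-2m-\alpha}}\,dy\geq C'\int_{|y|>1}|y|^{a+2m+\alpha-n}\,dy=+\infty$, contradicting the finiteness of $u(0)$ via \eqref{IE}. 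Your proof should therefore close with exactly this integrability-versus-lower-bound argument, invoking the structure of $f$; without it the conclusion $u$ solves \eqref{IE} does not follow. The same caveat applies, more mildly, to your ``IE $\Rightarrow$ PDE'' direction: the bootstrap from $u=I_{2m+\alpha}g$ to $u\in C^{2m+[\alpha],\{\alpha\}+\epsilon}_{loc}$ relies on $g$ inheriting H\"older regularity from $u$ (and its derivatives), which again presupposes concrete structure of $f$ beyond mere continuity in $x$.

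Aside from making those two implicit structural hypotheses on $f$ explicit, the argument is sound and is the intended one.
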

\begin{rem}\label{remark1}
Based on Theorem \ref{Thm0}, the proof of Theorem \ref{equivalence} is entirely similar to \cite{CFY,ZCCY} (see also \cite{DQ0,DQ2}), so we omit the details here.
\end{rem}
\begin{rem}\label{remark0}
One can observe that, Theorem \ref{Thm0} and Theorem \ref{equivalence} hold for PDEs \eqref{PDE} and IEs \eqref{IE} if we take the nonlinearities $f=|x|^{a}u^{p}$ ($a\geq0$, $p>0$), $f=|x|^{a}e^{nu}$ ($a\geq0$) or $f=|x|^{a}u^{p}(1+|\nabla u|^{2})^{\frac{\kappa}{2}}$ ($a\geq0$, $p>0$, $\kappa>0$) and so on $\cdots$. If we consider positive solution $u>0$, then Theorem \ref{Thm0} and Theorem \ref{equivalence} are also valid for PDEs \eqref{PDE} and IEs \eqref{IE} with $f=|x|^{a}u^{-q}$ ($a\geq0$, $q>0$).
\end{rem}

Based on the equivalence between PDEs \eqref{PDE} and IEs \eqref{IE}, we will first consider the conformally invariant case $f=u^{\frac{n+2m+\alpha}{n-2m-\alpha}}$, which is geometrically interesting.

The quantitative and qualitative properties of solutions to fractional order or higher order conformally invariant equations of the form
\begin{equation}\label{PDE-CI}
  (-\Delta)^{\frac{\gamma}{2}}u=u^{\frac{n+\gamma}{n-\gamma}} \qquad \text{with} \,\,\, 0<\gamma<n
\end{equation}
have been extensively studied (see \cite{CGS,CLL,CLO,CY,DQ0,DQ1,GNN,Lin,Li,WX,Xu} and the references therein). The classification results for conformally invariant equations \eqref{PDE-CI} have important applications in many problems from conformal geometry (i.e., prescribing scalar curvature problems, variational problems involving Paneitz operators on compact Riemannian manifolds, see \cite{CGS,CL0,CL3,CL1,CLO,CY,DQ1,Lin,Li,LWX,LZ,WX,Xu}). In \cite{CLO}, by developing the method of moving planes in integral forms, Chen, Li and Ou classified all positive $L^{\frac{2n}{n-\gamma}}_{loc}$ solutions to the equivalent integral equation of  PDE \eqref{PDE-CI} for general $\gamma\in(0,n)$. As a consequence, they obtained the classification for positive weak solutions to PDE \eqref{PDE-CI}. As to the classification theorems for positive classical solutions to PDE \eqref{PDE-CI}, all known results are focused on the cases that $0<\gamma<2$, or $2\leq\gamma<n$ is an even integer (see Caffarelli, Gidas and Spruck \cite{CGS}, Chen, Li and Li \cite{CLL}, Chen, Li and Ou \cite{CLO}, Gidas, Ni and Nirenberg \cite{GNN}, Lin \cite{Lin}, Wei and Xu \cite{WX}).

One should observe that, when $\gamma\in(2,n)$ is an odd integer, or more general, when $\gamma=2m+\alpha<n$ with $m\geq1$ and $0<\alpha<2$, classification for positive classical solutions to \eqref{PDE-CI} is still open. In the particular case $\gamma=3$, by applying the harmonic asymptotic expansions for $(-\Delta)^{\frac{1}{2}}\bar{u}$ ($\bar{u}$ is the Kelvin transform of $u$) and the method of moving planes to the third-order equation \eqref{PDE-CI} directly, Dai and Qin \cite{DQ1} derived the classification of nonnegative classical solutions to \eqref{PDE-CI} under additional weak integrability assumption $\int_{\mathbb{R}^{n}}\frac{u^{\frac{n+3}{n-3}}}{|x|^{n-3}}dx<\infty$.

In this paper, by the classification of positive $L^{\frac{2n}{n-2m-\alpha}}_{loc}$ solutions to integral equation \eqref{IE} in \cite{CLO} (Theorem 1 in \cite{CLO}) and the equivalence between PDE \eqref{PDE} and integral equation \eqref{IE} in Theorem \ref{equivalence}, we can classify all positive classical solutions to \eqref{PDE} in the conformally invariant cases $f=u^{\frac{n+2m+\alpha}{n-2m-\alpha}}$ without any assumptions on integrability or decay of $u$.

Our classification result for \eqref{PDE} in the conformally invariant cases is as follows.
\begin{thm}\label{classification}
Assume $2m+\alpha<n$, $m\geq1$, $0<\alpha<2$ and $f=u^{\frac{n+2m+\alpha}{n-2m-\alpha}}$. Suppose that $u$ is a nonnegative classical solution of \eqref{PDE}, then either $u\equiv0$ or $u$ is of the following form
\begin{equation*}
  u(x)=\mu^{\frac{n-2m-\alpha}{2}}Q\big(\mu(x-x_{0})\big) \,\,\,\,\,\,\,\,\,\,\,\, \text{for some} \,\, \mu>0 \,\,\, \text{and} \,\,\, x_{0}\in\mathbb{R}^{n},
\end{equation*}
where
\begin{equation*}
  Q(x):=\left(\frac{1}{R_{2m+\alpha,n}I\big(\frac{n-2m-\alpha}{2}\big)}\right)^{\frac{n-2m-\alpha}{2(2m+\alpha)}}\Big(\frac{1}{1+|x|^{2}}\Big)^{\frac{n-2m-\alpha}{2}}
\end{equation*}
with $I(s):=\frac{\pi^{\frac{n}{2}}\Gamma\big(\frac{n-2s}{2}\big)}{\Gamma(n-s)}$ for $0<s<\frac{n}{2}$.
\end{thm}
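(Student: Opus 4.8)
The plan is to reduce \eqref{PDE} with $f=u^{\frac{n+2m+\alpha}{n-2m-\alpha}}$ to its companion integral equation \eqref{IE} and then quote the classification of Chen--Li--Ou. First, observe that for a nonnegative classical solution $u$ the nonlinearity $f(x)=u(x)^{\frac{n+2m+\alpha}{n-2m-\alpha}}$ is continuous in $x$ (because $u$ is) and $2m+\alpha<n$, so Theorem \ref{equivalence} applies and tells us $u$ is a nonnegative solution of
\begin{equation*}
  u(x)=R_{2m+\alpha,n}\int_{\mathbb{R}^{n}}\frac{u(y)^{\frac{n+2m+\alpha}{n-2m-\alpha}}}{|x-y|^{n-2m-\alpha}}\,dy.
\end{equation*}
If $u\equiv0$ we are in the first alternative and done. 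If $u\not\equiv0$, then $u^{\frac{n+2m+\alpha}{n-2m-\alpha}}\ge0$ is positive on a set of positive measure; since the Riesz kernel is strictly positive, the identity above forces $u(x)>0$ for all $x\in\mathbb{R}^{n}$. Being a classical solution, $u$ is continuous, hence $u\in L^{\infty}_{loc}(\mathbb{R}^{n})\subset L^{\frac{2n}{n-2m-\alpha}}_{loc}(\mathbb{R}^{n})$, so $u$ meets the hypotheses of Theorem~1 in \cite{CLO} (the harmless constant $R_{2m+\alpha,n}$ does not affect the moving-plane-in-integral-form argument there).

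By that theorem, $u$ is radially symmetric and monotone decreasing about some $x_{0}\in\mathbb{R}^{n}$ and there are constants $c>0$, $\mu>0$ with
\begin{equation*}
  u(x)=c\,\mu^{\frac{n-2m-\alpha}{2}}\big(1+\mu^{2}|x-x_{0}|^{2}\big)^{-\frac{n-2m-\alpha}{2}}.
\end{equation*}
To pin down $c$, set $\gamma:=2m+\alpha$ and plug $v(x):=c(1+|x|^{2})^{-\frac{n-\gamma}{2}}$ back into the integral equation; using $v^{\frac{n+\gamma}{n-\gamma}}(x)=c^{\frac{n+\gamma}{n-\gamma}}(1+|x|^{2})^{-\frac{n+\gamma}{2}}$ together with the classical Riesz-potential identity
\begin{equation*}
  \int_{\mathbb{R}^{n}}\frac{1}{|x-y|^{n-\gamma}}\Big(\frac{1}{1+|y|^{2}}\Big)^{\frac{n+\gamma}{2}}dy=I\Big(\tfrac{n-\gamma}{2}\Big)\Big(\frac{1}{1+|x|^{2}}\Big)^{\frac{n-\gamma}{2}},\qquad I(s)=\frac{\pi^{n/2}\Gamma(\tfrac{n-2s}{2})}{\Gamma(n-s)},
\end{equation*}
one obtains the algebraic relation $R_{\gamma,n}\,c^{\frac{n+\gamma}{n-\gamma}}I(\tfrac{n-\gamma}{2})=c$, i.e. $c=\big(R_{\gamma,n}I(\tfrac{n-\gamma}{2})\big)^{-\frac{n-\gamma}{2\gamma}}$, which is exactly the constant in the definition of $Q$. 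Since the conformal rescaling $w\mapsto\mu^{\frac{n-\gamma}{2}}w(\mu\,\cdot)$ sends solutions of the integral equation (with this very constant) to solutions — this is where the critical exponent $\frac{n+\gamma}{n-\gamma}$ enters — we conclude $u(x)=\mu^{\frac{n-2m-\alpha}{2}}Q(\mu(x-x_{0}))$, and conversely the ``vice versa'' direction of Theorem \ref{equivalence} shows each such function is a genuine classical solution of \eqref{PDE}.

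The substantive input is entirely upstream: it is Theorem \ref{equivalence}, and behind it the new super poly-harmonic property Theorem \ref{Thm0}, that legitimise passing from the pointwise equation \eqref{PDE} to \eqref{IE} with no a priori integrability or decay hypothesis on $u$ — precisely where we improve on \cite{DQ1}. Given that equivalence, the remaining work is light: the only hypothesis of \cite{CLO} that must be checked, local $L^{\frac{2n}{n-2m-\alpha}}$ integrability, is immediate from continuity of a classical solution, so I do not anticipate any real obstacle in this last step; the only care needed is bookkeeping — carrying the exact normalization constant through the Riesz-potential identity and the conformal rescaling so that it matches the $Q$ in the statement.
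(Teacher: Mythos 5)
Your proof is correct and follows exactly the route the paper uses: convert to the integral equation via Theorem \ref{equivalence} and then quote Theorem~1 of Chen--Li--Ou (the paper states this in Remark~\ref{remark2} and cites \cite{DFHQW} for the constant rather than deriving it as you do). The only difference is that you work out the Riesz-potential computation for the normalization constant in $Q$ by hand, which the paper instead cites.
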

\begin{rem}\label{remark2}
Theorem \ref{classification} follows directly from Theorem 1 in \cite{CLO} and Theorem \ref{equivalence}, so we omit the details here. The exact constants in the expression of $Q(x)$ are given by formula (37) in Lemma 4.1 in \cite{DFHQW}.
\end{rem}

\begin{rem}\label{remark3}
Combining Theorem \ref{classification} with the classification theorems in \cite{CGS,CLL,CLO,GNN,Lin,WX} gives us the complete classification results for conformally invariant equations \eqref{PDE-CI} in all the cases $0<\gamma<n$. If we take $\alpha=1$, then Theorem \ref{classification} gives the classification results for all the odd order conformally invariant equations \eqref{PDE}. In particular, Theorem \ref{classification} completely improves the classification results for third order conformally invariant equations in \cite{DQ1} by removing the integrability assumption $\int_{\mathbb{R}^{n}}\frac{u^{\frac{n+3}{n-3}}}{|x|^{n-3}}dx<\infty$.
\end{rem}

Next, we take $f=|x|^{a}u^{p}$ ($a\geq0$, $p>0$) and study the Liouville property of nonnegative solutions in the subcritical cases.

For PDEs \eqref{PDE} and IEs \eqref{IE}, we say the Hardy-H\'{e}non type nonlinearities $f=|x|^{a}u^{p}$ is subcritical if $0<p<p_{c}(a):=\frac{n+2m+\alpha+2a}{n-2m-\alpha}$, critical if $p=p_{c}(a)$ and super-critical if $p>p_{c}(a)$. There are also lots of literature on Liouville type theorems for fractional order or higher order Hardy-H\'{e}non type equations in the subcritical cases, and we refer to \cite{CF,CFL,CFY,CGS,CLL,DL,DPQ,DQ1,DQ2,DQ4,DQ5,DQZ,Lin,LZ1,WX,ZCCY} and the references therein. It should be noted that, all the known results focused on the cases $m=0$ or $\alpha=0$, hence Liouville type theorems for general fractional higher-order cases $m\geq1$ and $0<\alpha<2$ are still open. In the particular case $m=\alpha=1$ and $f=u^{p}$ with $1\leq p<\frac{n+3}{n-3}$, Dai and Qin \cite{DQ1} derived Liouville type theorem for nonnegative classical solutions to \eqref{PDE} under additional weak integrability assumption $\int_{\mathbb{R}^{n}}\frac{u^{p}}{|x|^{n-3}}dx<\infty$.

In this paper, by applying the method of scaling spheres developed recently by Dai and Qin \cite{DQ2} (see also \cite{DQ3,DQ4,DQZ}), we will establish Liouville type theorem for nonnegative solutions to IEs \eqref{IE}. Our Liouville type result for IEs \eqref{IE} is as follows.
\begin{thm}\label{Thm2}
Assume $2m+\alpha<n$, $m\geq1$, $0<\alpha<2$ and $f=|x|^{a}u^{p}$ with $a\geq0$ and $0<p<p_{c}(a)$. Suppose $u\in C(\mathbb{R}^{n})$ is a nonnegative solution to IEs \eqref{IE}, then $u\equiv0$ in $\mathbb{R}^{n}$.
\end{thm}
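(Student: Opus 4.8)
The plan is to argue by contradiction, combining the method of scaling spheres of Dai--Qin \cite{DQ2} with a bootstrap run directly on the integral equation \eqref{IE}. Write $\gamma:=2m+\alpha\in(0,n)$ and suppose $u\not\equiv0$. Since the Riesz kernel is strictly positive and $|y|^{a}u(y)^{p}$ does not vanish almost everywhere, \eqref{IE} forces $u>0$ on $\mathbb{R}^{n}\setminus\{0\}$; restricting the integral in \eqref{IE} to a ball on which $u$ is bounded below gives the crude lower bound $u(x)\geq C_{0}|x|^{-(n-\gamma)}$ for $|x|$ large, and convergence of the integral in \eqref{IE} also forces $u\to0$ at infinity.

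For $\lambda>0$ let $u_{\lambda}(x):=\left(\tfrac{\lambda}{|x|}\right)^{n-\gamma}u\!\left(\tfrac{\lambda^{2}x}{|x|^{2}}\right)$ be the Kelvin transform centred at the origin. A change of variables in \eqref{IE} shows that $u_{\lambda}$ solves
\begin{equation*}
u_{\lambda}(x)=R_{\gamma,n}\int_{\mathbb{R}^{n}}\frac{\big(\tfrac{\lambda}{|y|}\big)^{(n-\gamma)\sigma}\,|y|^{a}u_{\lambda}(y)^{p}}{|x-y|^{n-\gamma}}\,dy,\qquad \sigma:=p_{c}(a)-p>0 .
\end{equation*}
Subtracting this from \eqref{IE} and folding the part of the integral over $B_{\lambda}(0)^{c}$ back into $B_{\lambda}(0)$ through the inversion $y\mapsto\lambda^{2}y/|y|^{2}$ yields, on $\Sigma_{\lambda}:=B_{\lambda}(0)\setminus\{0\}$, an identity for $w_{\lambda}:=u_{\lambda}-u$ with a positive kernel whose integrand carries the weight $(\lambda/|y|)^{(n-\gamma)\sigma}$, which is $\geq1$ on $\Sigma_{\lambda}$ precisely because $\sigma>0$; moreover $w_{\lambda}=0$ on $\partial B_{\lambda}(0)$. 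Using this identity I would first verify $w_{\lambda}\geq0$ on $\Sigma_{\lambda}$ for all small $\lambda$ (near the origin $u_{\lambda}\geq C_{0}\lambda^{-(n-\gamma)}\to\infty$ as $\lambda\to0$ while $u$ stays bounded there, and a smallness estimate controls the intermediate region), and then set $\lambda_{0}:=\sup\{\lambda>0:\ w_{\mu}\geq0\ \text{on}\ \Sigma_{\mu}\ \text{for all}\ \mu\in(0,\lambda]\}$. The crucial point is $\lambda_{0}=+\infty$: if $\lambda_{0}<\infty$ then $w_{\lambda_{0}}\geq0$ by continuity, the strict inequality $(\lambda_{0}/|y|)^{(n-\gamma)\sigma}>1$ on $\Sigma_{\lambda_{0}}$ forbids $w_{\lambda_{0}}\equiv0$, the positive kernel then forces $w_{\lambda_{0}}>0$ on all of $\Sigma_{\lambda_{0}}$, and a narrow-region / dominated-convergence argument pushes $w_{\lambda}\geq0$ for $\lambda$ slightly larger than $\lambda_{0}$, contradicting the definition of $\lambda_{0}$. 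Hence $w_{\lambda}\geq0$ on $\Sigma_{\lambda}$ for every $\lambda>0$; rewriting this with $x=\lambda^{2}z/|z|^{2}$ gives
\begin{equation*}
u(z)\ \geq\ \Big(\tfrac{\lambda}{|z|}\Big)^{n-\gamma}u\!\Big(\tfrac{\lambda^{2}z}{|z|^{2}}\Big)\qquad\text{for all}\ 0<\lambda<|z| ,
\end{equation*}
and choosing $\lambda$ so that $\lambda^{2}/|z|$ lies in a fixed annulus on which $u\geq c>0$ yields the improved lower bound $u(z)\geq C_{1}|z|^{-(n-\gamma)/2}$ for $|z|$ large.

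It remains to bootstrap on \eqref{IE}. If $u(x)\geq C_{k}|x|^{-\mu_{k}}$ for $|x|\geq R_{k}$, then bounding the integral in \eqref{IE} below by its part over $\{|y-x|<|x|/2\}$, on which $|y|\asymp|x|$ and $\int|x-y|^{-(n-\gamma)}\,dy$ converges (as $\gamma>0$), gives $u(x)\geq C_{k+1}|x|^{-(p\mu_{k}-a-\gamma)}$ for $|x|\geq2R_{k}$, i.e. $\mu_{k+1}=p\mu_{k}-(a+\gamma)$. Starting from $\mu_{0}=(n-\gamma)/2$, elementary analysis of this affine recursion shows $\mu_{k}$ eventually reaches the threshold $\tfrac{a+\gamma}{p}$ or below: for $p<1$ the iterates decrease toward the negative fixed point $\tfrac{a+\gamma}{p-1}$, for $p=1$ they decrease by $a+\gamma$ at each step, and for $p>1$ they decrease to $-\infty$ because $\mu_{0}<\tfrac{a+\gamma}{p-1}$, which is exactly the subcriticality $p<p_{c}(a)=\tfrac{n+\gamma+2a}{n-\gamma}$. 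But once $\mu_{k}\leq\tfrac{a+\gamma}{p}$, the bound $u(y)\geq C_{k}|y|^{-\mu_{k}}$ makes $\int_{\mathbb{R}^{n}}|y|^{a}u(y)^{p}|x-y|^{-(n-\gamma)}\,dy=+\infty$ at every $x$, contradicting $u\in C(\mathbb{R}^{n})$. Therefore $u\equiv0$.

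The main obstacle is the scaling-spheres step: deriving the $w_{\lambda}$ identity with a genuinely positive kernel, handling the singularity at the origin (where $u_{\lambda}$ may blow up, so one must work on $\Sigma_{\lambda}=B_{\lambda}\setminus\{0\}$ and keep careful track of integrability of $u_{\lambda}$ near $0$), and carrying out the maximum-principle argument that both excludes $w_{\lambda_{0}}\equiv0$ and lets the sphere expand past $\lambda_{0}$. It is exactly here that the subcritical sign $\sigma=p_{c}(a)-p>0$ is indispensable, through $(\lambda/|y|)^{(n-\gamma)\sigma}>1$ on $\Sigma_{\lambda}$; the concluding bootstrap, by contrast, is routine.
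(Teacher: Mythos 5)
Your proposal is correct and follows essentially the same strategy as the paper: apply the method of scaling spheres centred at the origin (Kelvin transform, the key kernel identity with the factor $(\lambda/|y|)^{\tau}\geq1$ coming from subcriticality, the dichotomy $w_{\lambda_0}\equiv0$ vs.\ $w_{\lambda_0}>0$, the narrow-region extension past $\lambda_0$), conclude $u(x)\geq C|x|^{-(n-2m-\alpha)/2}$ for large $|x|$, and then run the affine bootstrap $\mu_{k+1}=p\mu_k-(a+2m+\alpha)$ on the integral equation until it contradicts finiteness of the Riesz integral. The only differences are cosmetic: the paper phrases the final contradiction via $u(0)=C\int u^p|y|^{a-(n-2m-\alpha)}\,dy<\infty$ rather than divergence at a generic $x$, and your remark that $u>0$ only on $\mathbb{R}^n\setminus\{0\}$ is slightly too weak (the positivity of the kernel gives $u>0$ at the origin as well, which is what the paper exploits), but neither affects the argument.
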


\begin{rem}\label{remark4}
It is clear from the proof of Theorem \ref{Thm2} that (see \eqref{3-38} in Section 3), the Liouville type results in Theorem \ref{Thm2} are also valid for $f=|x_{i}|^{a}u^{p}$ ($i=1,2,\cdots,n$) with $a\geq0$ and $0<p<p_{c}(a)$. Theorem \ref{Thm2} can also be available for more general nonlinearities $f(x,u)$ satisfying appropriate assumptions, we leave the details to readers (we refer to \cite{DQ2,DQ3,DQ4,DQZ}).
\end{rem}

From the equivalence between PDEs \eqref{PDE} and IEs \eqref{IE} in Theorem \ref{equivalence} and Theorem \ref{Thm2}, we derive the following Liouville type result for nonnegative classical solutions to PDEs \eqref{PDE} immediately.
\begin{cor}\label{Cor0}
Assume $2m+\alpha<n$, $m\geq1$, $0<\alpha<2$ and $f(x,u)=|x|^{a}u^{p}$ with $a\geq0$ and $0<p<p_{c}(a)$. Suppose $u$ is a nonnegative classical solution to PDEs \eqref{PDE}, then $u\equiv0$ in $\mathbb{R}^{n}$.
\end{cor}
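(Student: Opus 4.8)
\textbf{Proof proposal for Corollary \ref{Cor0}.}

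The plan is to obtain Corollary \ref{Cor0} as an immediate consequence of the equivalence in Theorem \ref{equivalence} together with the Liouville theorem for integral equations in Theorem \ref{Thm2}. First I would check that the Hardy--H\'{e}non nonlinearity $f(x,u)=|x|^{a}u^{p}$ with $a\geq0$ and $0<p<p_{c}(a)$ meets the standing hypotheses of the upstream results: since $a\geq0$ and $p>0$ we have $f\geq0$, and $f$ is continuous with respect to $x\in\mathbb{R}^{n}$, which is exactly what Theorem \ref{Thm0} and Theorem \ref{equivalence} require (this case is moreover explicitly listed in Remark \ref{remark0}). Also $2m+\alpha<n$, $m\geq1$, $0<\alpha<2$ are assumed, so all three theorems apply.

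Next, let $u$ be a nonnegative classical solution of \eqref{PDE} with this $f$. By Theorem \ref{equivalence} (whose proof rests on the super poly-harmonic property established in Theorem \ref{Thm0}), $u$ is also a nonnegative solution of the integral equation \eqref{IE} with the same nonlinearity, that is,
\begin{equation*}
  u(x)=\int_{\mathbb{R}^{n}}\frac{R_{2m+\alpha,n}}{|x-y|^{n-2m-\alpha}}\,|y|^{a}u(y)^{p}\,dy,\qquad x\in\mathbb{R}^{n},
\end{equation*}
and in particular the right-hand side is finite for every $x$. Since $u$ is a classical solution it is continuous, so $u\in C(\mathbb{R}^{n})$ and $u\geq0$. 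Applying Theorem \ref{Thm2} in the subcritical range $0<p<p_{c}(a)$ then forces $u\equiv0$ in $\mathbb{R}^{n}$, which is the assertion.

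As for the main obstacle: at the level of the corollary there is essentially none, since it is a one-line deduction. The genuine difficulties are entirely contained in the two results being invoked --- proving the super poly-harmonic property of Theorem \ref{Thm0} via the Poisson representation for $(-\Delta)^{\frac{\alpha}{2}}$ and the integral estimates on the spherical average, which then underpins the PDE/IE equivalence, and running the method of scaling spheres for \eqref{IE} to get Theorem \ref{Thm2}. So I would expect the write-up of Corollary \ref{Cor0} to be brief, merely citing Theorems \ref{equivalence} and \ref{Thm2}.
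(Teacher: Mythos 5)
Your proposal is correct and matches the paper's proof exactly: the corollary is derived by invoking Theorem \ref{equivalence} to pass from the PDE \eqref{PDE} to the integral equation \eqref{IE}, and then applying the Liouville theorem for \eqref{IE} given in Theorem \ref{Thm2}. The hypothesis checks you include are appropriate but not emphasized in the paper, which states the deduction as immediate.
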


\begin{rem}\label{remark5}
If we take $\alpha=1$, then Corollary \ref{Cor0} gives Liouville type results for all the odd order equations \eqref{PDE} with $f=|x|^{a}u^{p}$ in subcritical cases $0<p<p_{c}(a)$. In particular, Corollary \ref{Cor0} completely improves the Liouville theorem for third order equations \eqref{PDE} with $f=u^{p}$ ($1\leq p<\frac{n+3}{n-3}$) in \cite{DQ1} by removing the integrability assumption $\int_{\mathbb{R}^{n}}\frac{u^{p}}{|x|^{n-3}}dx<\infty$ and extending $1\leq p<\frac{n+3}{n-3}$ to the full subcritical range $0<p<\frac{n+3}{n-3}$.
\end{rem}

{\em (iii)  Critical and super-critical order cases: $n\leq 2m+\alpha<+\infty$.}
\smallskip

As an immediate consequence of the super poly-harmonic properties in Theorem \ref{Thm0}, by arguments developed by Chen, Dai and Qin \cite{CDQ}, we can establish Liouville type theorem for nonnegative solutions to \eqref{PDE} with general nonlinearities $f$ in both critical and super-critical order cases. For the particular case $\alpha=0$, Liouville type theorems for integer higher-order H\'{e}non-Hardy type equations in $\mathbb{R}^{n}$ or $\mathbb{R}^{n}_{+}$ have been derived by Chen, Dai and Qin \cite{CDQ} and Dai and Qin \cite{DQ3} in both critical and super-critical order cases. Our result will extend the results in \cite{CDQ} to general fractional higher-order cases $0<\alpha<2$ and general nonlinearities $f(x,u,\cdots)$.

For the critical and super-critical order cases we have the following result.
\begin{thm}\label{Thm3}
Assume $n\geq3$, $m\geq1$, $0<\alpha<2$, $\frac{n}{2}\leq m+\frac{\alpha}{2}<+\infty$, $f\geq0$ is continuous w.r.t. $x\in\mathbb{R}^{n}$ and $f>0$ at some point in $\mathbb{R}^{n}$ if $u>0$ in the whole $\mathbb{R}^{n}$. Suppose that $u$ is a nonnegative classical solution to \eqref{PDE}, then $u\equiv0$ in $\mathbb{R}^{n}$.
\end{thm}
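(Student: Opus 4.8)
The plan is to deduce Theorem \ref{Thm3} from the super poly-harmonic property in Theorem \ref{Thm0} together with the Liouville theorem for fractional poly-harmonic functions (Theorem \ref{Thm4}) and classical facts about (poly-)harmonic functions with at most polynomial growth. First I would set $v:=(-\Delta)^{\frac{\alpha}{2}}u$, which by the regularity hypothesis belongs to $C^{2m}(\mathbb{R}^n)$ and solves $(-\Delta)^m v=f(x,u,Du,\cdots)\geq0$ in $\mathbb{R}^n$. By Theorem \ref{Thm0} we have $(-\Delta)^i v=(-\Delta)^{i+\frac{\alpha}{2}}u\geq0$ in $\mathbb{R}^n$ for every $i=0,1,\cdots,m-1$. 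In particular $w:=(-\Delta)^{m-1}v\geq0$ is (super)harmonic-like: $-\Delta w=f\geq0$, so $w$ is a nonnegative superharmonic function on all of $\mathbb{R}^n$. The first key step is then the standard fact that a nonnegative superharmonic function on $\mathbb{R}^n$ (with $n\geq3$) must be constant — this follows from the mean value inequality $\bar w(r)\le w(0)$ for spherical averages combined with nonnegativity, or alternatively from the Poisson-type representation — hence $w\equiv c_{m-1}\geq0$ and $\Delta w\equiv0$, which forces $f\equiv0$ in $\mathbb{R}^n$.

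Once $f\equiv0$, the hypothesis ``$f>0$ at some point if $u>0$ everywhere'' immediately rules out the possibility that $u>0$ throughout $\mathbb{R}^n$; so there exists $x_*$ with $u(x_*)=0$. Meanwhile, with $f\equiv0$ the function $v=(-\Delta)^{\frac{\alpha}{2}}u$ satisfies $(-\Delta)^m v=0$, i.e. $v$ is poly-harmonic of order $m$, and by Theorem \ref{Thm0} each $(-\Delta)^i v\ge0$ for $i=0,\cdots,m-1$. Feeding this back down the chain: $(-\Delta)^{m-1}v\equiv c_{m-1}\ge0$ is constant (shown above); then $(-\Delta)^{m-2}v$ is a nonnegative function whose Laplacian is the constant $-c_{m-1}\le0$, so $(-\Delta)^{m-2}v$ is again superharmonic and nonnegative, hence constant $c_{m-2}\ge0$, and this also forces $c_{m-1}=0$. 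Iterating, every $(-\Delta)^i v$ is a nonnegative constant $c_i$, and comparing Laplacians at each level forces $c_{m-1}=\cdots=c_1=0$, leaving $v=(-\Delta)^{\frac{\alpha}{2}}u\equiv c_0\ge0$. Thus $u$ solves $(-\Delta)^{\frac{\alpha}{2}}u=c_0\ge0$ in $\mathbb{R}^n$ with $u\ge0$.

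The final step is to conclude $u\equiv0$ from $(-\Delta)^{\frac{\alpha}{2}}u\equiv c_0\ge0$, $u\ge0$, and $u(x_*)=0$. If $c_0=0$ then $u$ is $\alpha$-harmonic and nonnegative, so by the Liouville theorem for $(-\Delta)^{\frac{\alpha}{2}}$ (cited from \cite{BKN,ZCCY}) $u\equiv C$ for a constant $C\ge0$; since $u(x_*)=0$, $C=0$. If $c_0>0$, I would derive a contradiction directly from the nonlocal definition \eqref{nonlocal defn} evaluated at the minimum point $x_*$: there $u(x_*)-u(y)\le0$ for all $y$, so $(-\Delta)^{\frac{\alpha}{2}}u(x_*)=C_{n,\alpha}\,P.V.\!\int_{\mathbb{R}^n}\frac{u(x_*)-u(y)}{|x_*-y|^{n+\alpha}}\,dy\le0<c_0$, unless $u\equiv u(x_*)=0$, which again contradicts $c_0>0$. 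Either way $u\equiv0$.

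The main obstacle, and the step deserving the most care, is justifying that a nonnegative superharmonic function on $\mathbb{R}^n$ is constant \emph{in this exact setting} — the cited arguments of Chen–Dai–Qin \cite{CDQ} handle precisely the interplay between the finitely many layers $(-\Delta)^i v$ and the growth of $v$ inherited from $u\in\mathcal{L}_\alpha(\mathbb{R}^n)$, so one should verify that $v$ and all its poly-harmonic ``partial sums'' have controlled growth at infinity (at most polynomial of a degree bounded in terms of $m$) before applying the classical Liouville/growth theorems for poly-harmonic functions; the nonlocal term $v=(-\Delta)^{\frac{\alpha}{2}}u$ requires the $\mathcal{L}_\alpha$ condition to bound $v$ pointwise. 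Apart from that bookkeeping, the argument is a clean descent through Theorem \ref{Thm0}.
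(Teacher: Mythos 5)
Your argument breaks at the first key step. The claim that ``a nonnegative superharmonic function on $\mathbb{R}^n$ (with $n\geq3$) must be constant'' is false for $n\geq 3$. For instance, $w(x)=\min\{1,|x|^{2-n}\}$ is a bounded, nonnegative, nonconstant superharmonic function on all of $\mathbb{R}^n$; more to the point, $w(x)=R_{2,n}\int_{\mathbb{R}^n}|x-y|^{2-n}g(y)\,dy + c$ with $g\geq 0$ compactly supported and nontrivial and $c\geq 0$ gives a $C^2$ nonnegative superharmonic function with $-\Delta w = g\not\equiv 0$. The mean value inequality $\overline{w}(r)\leq w(0)$ together with $w\geq 0$ only bounds the spherical averages; it does not force constancy. (What you claim \emph{is} true for $n\leq 2$, because there the fundamental solution is unbounded below, but the theorem's hypothesis is $n\geq 3$.) Once this step is removed, the conclusion $f\equiv 0$ — and hence the entire subsequent descent — no longer follows. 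A further symptom that something must be wrong: your argument never invokes the critical/super-critical hypothesis $m+\frac{\alpha}{2}\geq\frac{n}{2}$, yet a Liouville theorem cannot hold without it, since in the subcritical regime $2m+\alpha<n$ equation \eqref{PDE} with $f=u^{\frac{n+2m+\alpha}{n-2m-\alpha}}$ has nontrivial positive solutions (Theorem \ref{classification}).

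The paper's proof is genuinely different and the super-critical hypothesis enters in an essential quantitative way. Following \cite{CDQ}, one uses Theorem \ref{Thm0} plus the maximum principle on balls $B_R$ (and Lemma \ref{max} for the $(-\Delta)^{\alpha/2}$-step) to show each layer $(-\Delta)^{m-k+\frac{\alpha}{2}}u$ equals the Riesz potential $R_{2,n}\ast f_{k}$ of the previous layer, with vanishing additive constants, where the iterated Riesz potentials are telescoped via $R_{\alpha_1,n}|x|^{\alpha_1-n}\ast R_{\alpha_2,n}|x|^{\alpha_2-n}=R_{\alpha_1+\alpha_2,n}|x|^{\alpha_1+\alpha_2-n}$. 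Evaluating at $0$ and assuming $u\not\equiv 0$ (so $u>0$ and, by hypothesis, $f>0$ somewhere), this yields a lower bound of the form $\int_{|y|\geq R_0}|y|^{-\kappa}(\ln|y|)^{-1}\,dy$ with $\kappa\leq n$ precisely because $m+\frac{\alpha}{2}\geq\frac{n}{2}$; this integral diverges, contradicting finiteness of $(-\Delta)^{m-\lceil n/2\rceil+\frac{\alpha}{2}}u(0)$ (or of $u(0)$). You would need to rebuild the argument around these representation formulas and the logarithmic divergence rather than a Liouville theorem for superharmonic functions.
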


\begin{rem}\label{remark6}
If we take $\alpha=1$, then Theorem \ref{Thm3} gives Liouville type results for all the critical and super-critical order equations \eqref{PDE} involving odd order Laplacians. One should observe that, if $f\geq0$ is continuous w.r.t. $x\in\mathbb{R}^{n}$ and $f\geq C|x|^{a}u^{p}$ for some $a\in\mathbb{R}$, $p>0$, $C>0$ and some point $x\neq0$ in $\mathbb{R}^{n}$, then $f$ satisfies the assumptions in Theorem \ref{Thm3} and hence Theorem \ref{Thm3} is valid for equations \eqref{PDE} with such kind of nonlinearities $f(x,u,\cdots)$.
\end{rem}

\begin{rem}\label{remark7}
If we consider positive solution $u>0$, suppose $f\geq0$ is continuous w.r.t. $x\in\mathbb{R}^{n}$ and $f\geq C|x|^{a}u^{-q}$ for some $a\in\mathbb{R}$, $q>0$, $C>0$ and some point $x\neq0$ in $\mathbb{R}^{n}$, then Theorem \ref{Thm3} implies nonexistence of positive solutions and thus extend Theorems 1.2 and 1.3 in \cite{N} to general fractional higher-order cases $0<\alpha<2$ and general nonlinearities $f(x,u,\cdots)$.
\end{rem}

This paper is organized as follows. In Section 2, we will carry out our proof of Theorem \ref{Thm0}. In Section 3, we will prove Theorem \ref{Thm1}. Section 4 and 5 are devoted to proving Theorem \ref{Thm2} and \ref{Thm3} respectively. In the Appendix, we establish an important characterization for $\alpha$-harmonic functions via the averages $\int_{R}^{+\infty}\frac{R^{\alpha}}{r(r^{2}-R^{2})^{\frac{\alpha}{2}}}\overline{u}(r)dr$ and deduce some important properties for $\alpha$-harmonic functions.

Throughout this paper, we will use $C$ to denote a general positive constant that may depend on $u$ and the quantities appearing in the subscript, and whose value may differ from line to line.

\section{Proof of Theorem \ref{Thm0}}
In this section, we will carry out our proof of the super poly-harmonic properties for nonnegative solutions to \eqref{PDE} (i.e., Theorem \ref{Thm0}) via contradiction arguments.

Let $v_{i}:=(-\Delta)^{i+\frac{\alpha}{2}}u$ for $i=0,1,\cdots,m-1$, then it follows from equation \eqref{PDE} that
\begin{equation}\label{2-0}
\left\{\begin{array}{l}{(-\Delta)^{\frac{\alpha}{2}}u=v_{0} \quad \text{in} \,\, \mathbb{R}^{n},} \\ {-\Delta v_{0}=v_{1} \quad \text{in} \,\, \mathbb{R}^{n},} \\ {\cdots\cdots} \\{-\Delta v_{m-1}=f\geq0 \quad \text{in} \,\, \mathbb{R}^{n}.}\end{array}\right.
\end{equation}

Suppose that Theorem \ref{Thm0} does not hold, then there must exist a largest integer $0\leq k\leq m-1$ and a point $x_{0}\in\mathbb{R}^{n}$ such that
\begin{equation}\label{2-1}
v_{k}\left(x_{0}\right)=(-\Delta)^{k+\frac{\alpha}{2}} u\left(x_{0}\right)<0.
\end{equation}
Let
\begin{equation}\label{2-3'}
  \bar{g}(r)=\bar{g}\big(|x-x_{0}|\big):=\frac{1}{|\partial B_{r}(x_{0})|}\int_{\partial B_{r}(x_{0})}g(x)d\sigma
\end{equation}
be the spherical average of a function $g$ with respect to the center $x_{0}$.

First, we will show that $0\leq k\leq m-1$ is even. Suppose not, assume $k$ is an odd integer. From \eqref{2-0} and the well-known property $\overline{\Delta u}=\Delta\bar{u}$, we get
\begin{equation}\label{2-2}
\overline{v_{k}}(r)\leq\overline{v_{k}}(0):=-c_{0}<0, \qquad \forall \,\, r>0.
\end{equation}
It follows immediately that
\begin{equation}\label{2-3}
\overline{v_{k-1}}(r)\geq\overline{v_{k-1}}(0)+\frac{c_{0}}{2n}r^{2}, \qquad \forall \,\, r>0,
\end{equation}
and
\begin{equation}\label{2-4}
\overline{v_{k-2}}(r)\leq\overline{v_{k-2}}(0)-\frac{r^{2}}{2n}\overline{v_{k-1}}(0)-\frac{c_{0}}{8n(n+2)}r^{4}, \qquad \forall \,\, r>0.
\end{equation}
Repeating the above argument, we get
\begin{equation}\label{2-5}
  \overline{v_{0}}(r)\geq \overline{v_{0}}(0)+c_{1}r^{2}+c_{2}r^{4}+\cdots+c_{k}r^{2k}, \qquad \forall \,\, r>0,
\end{equation}
where $c_{k}>0$. From \eqref{2-5}, we infer that there exists a $r_{0}$ large enough, such that
\begin{equation}\label{2-6}
\overline{v_{0}}(r)\geq\frac{1}{2}c_{k}r^{2k}, \qquad \forall \,\, r>r_{0}.
\end{equation}
From the first equation in \eqref{2-0}, we conclude that, for arbitrary $R>0$,
\begin{equation}\label{2-7}
  u(x)=\int_{B_{R}(x_{0})}G^\alpha_R(x,y)v_{0}(y)dy+\int_{|y-x_{0}|>R}P^{\alpha}_{R}(x,y)u(y)dy, \qquad \forall \,\, x\in B_{R}(x_{0}),
\end{equation}
where the Green's function for $(-\Delta)^{\frac{\alpha}{2}}$ with $0<\alpha<2$ on $B_R(x_{0})$ is given by
\begin{equation}\label{2-8}
G^\alpha_R(x,y):=\frac{C_{n,\alpha}}{|x-y|^{n-\alpha}}\int_{0}^{\frac{t_{R}}{s_{R}}}\frac{b^{\frac{\alpha}{2}-1}}{(1+b)^{\frac{n}{2}}}db
\,\,\,\,\,\,\,\,\, \text{if} \,\, x,y\in B_{R}(x_{0})
\end{equation}
with $s_{R}=\frac{|x-y|^{2}}{R^{2}}$, $t_{R}=\left(1-\frac{|x-x_{0}|^{2}}{R^{2}}\right)\left(1-\frac{|y-x_{0}|^{2}}{R^{2}}\right)$, and $G^{\alpha}_{R}(x,y)=0$ if $x$ or $y\in\mathbb{R}^{n}\setminus B_{R}(x_{0})$ (see \cite{K}), and the Poisson kernel $P^{\alpha}_{R}(x,y)$ for $(-\Delta)^{\frac{\alpha}{2}}$ in $B_{R}(x_{0})$ is defined by $P^{\alpha}_{R}(x,y):=0$ for $|y-x_{0}|<R$ and
\begin{equation}\label{2-9}
  P^{\alpha}_{R}(x,y):=\frac{\Gamma(\frac{n}{2})}{\pi^{\frac{n}{2}+1}}\sin\frac{\pi\alpha}{2}\left(\frac{R^{2}-|x-x_{0}|^{2}}{|y-x_{0}|^{2}-R^{2}}\right)^{\frac{\alpha}{2}}\frac{1}{|x-y|^{n}}
\end{equation}
for $|y-x_{0}|>R$ (see \cite{CLM}). Therefore, we have
\begin{eqnarray}\label{2-10}
  &&+\infty>u(x_{0})=\int_{B_{R}(x_{0})}\frac{C_{n,\alpha}}{|y-x_{0}|^{n-\alpha}}\left(\int_{0}^{\frac{R^{2}}{|y-x_{0}|^{2}}-1}\frac{b^{\frac{\alpha}{2}-1}}{(1+b)^{\frac{n}{2}}}db\right)v_{0}(y)dy \\
  \nonumber &&\qquad\qquad\qquad\,\,\,+C'_{n,\alpha}\int_{|y-x_{0}|>R}\frac{R^{\alpha}}{(|y-x_{0}|^{2}-R^{2})^{\frac{\alpha}{2}}}\cdot\frac{u(y)}{|y-x_{0}|^{n}}dy \\
 \nonumber &&\qquad=C_{n,\alpha}\int_{0}^{R}r^{\alpha-1}\left(\int_{0}^{\frac{R^{2}}{r^{2}}-1}\frac{b^{\frac{\alpha}{2}-1}}{(1+b)^{\frac{n}{2}}}db\right)\overline{v_{0}}(r)dr
  +C'_{n,\alpha}\int_{R}^{+\infty}\frac{R^{\alpha}}{r(r^{2}-R^{2})^{\frac{\alpha}{2}}}\overline{u}(r)dr.
\end{eqnarray}
Observe that, if $0<r\leq\frac{R}{2}$, then $3\leq\frac{R^{2}}{r^{2}}-1<+\infty$, and hence
\begin{equation}\label{2-11}
  \int_{0}^{3}\frac{b^{\frac{\alpha}{2}-1}}{(1+b)^{\frac{n}{2}}}db
  \leq\int_{0}^{\frac{R^{2}}{r^{2}}-1}\frac{b^{\frac{\alpha}{2}-1}}{(1+b)^{\frac{n}{2}}}db
  \leq\int_{0}^{+\infty}\frac{b^{\frac{\alpha}{2}-1}}{(1+b)^{\frac{n}{2}}}db.
\end{equation}
As a consequence of \eqref{2-5}, \eqref{2-6}, \eqref{2-10} and \eqref{2-11}, we deduce that
\begin{eqnarray}\label{2-12}
  u(x_{0})&\geq&C_{n,\alpha}\int_{r_{0}}^{\frac{R}{2}}r^{\alpha-1}\overline{v_{0}}(r)dr-\widetilde{C}_{n,\alpha}\int_{0}^{r_{0}}r^{\alpha-1}|\overline{v_{0}}(r)|dr \\
 \nonumber &\geq&C\int_{r_{0}}^{\frac{R}{2}}r^{2k+\alpha-1}dr-\widetilde{C}\geq CR^{2k+\alpha}-\widetilde{C}
\end{eqnarray}
for any $R>2r_{0}$. By letting $R\rightarrow+\infty$ in \eqref{2-12}, we get immediately a contradiction. Therefore, $k$ must be even.

Next, we will show that $k=0$. Suppose on contrary that $2\leq k\leq m-1$ is even, through similar procedure as in deriving \eqref{2-5}, we obtain
\begin{equation}\label{2-13}
  \overline{v_{0}}(r)\leq\overline{v_{0}}(0)-c_{1}r^{2}-c_{2}r^{4}-\cdots-c_{k}r^{2k}, \qquad \forall \,\, r>0,
\end{equation}
where $c_{k}>0$. Thus there exists a $r_{1}>0$ large enough such that
\begin{equation}\label{2-14}
\overline{v_{0}}(r)\leq-\frac{1}{2}c_{k}r^{2 k}, \qquad \forall \,\, r>r_{1}.
\end{equation}
Observe that, if $\frac{R}{2}<r<R$, then $0<\frac{R^{2}}{r^{2}}-1<3$, and hence
\begin{equation}\label{2-15}
  \int_{0}^{\frac{R^{2}}{r^{2}}-1}\frac{b^{\frac{\alpha}{2}-1}}{(1+b)^{\frac{n}{2}}}db
  \geq\int_{0}^{\frac{R^{2}}{r^{2}}-1}\frac{b^{\frac{\alpha}{2}-1}}{2^{n}}db\geq C_{n,\alpha}\left(\frac{R^{2}}{r^{2}}-1\right)^{\frac{\alpha}{2}}.
\end{equation}
It follows from \eqref{2-10}, \eqref{2-11}, \eqref{2-13}, \eqref{2-14} and \eqref{2-15} that, for any $R>2r_{1}$,
\begin{eqnarray}\label{2-16}
 && \int_{R}^{+\infty}\frac{R^{\alpha}\overline{u}(r)}{r(r^{2}-R^{2})^{\frac{\alpha}{2}}}dr\geq-C_{n,\alpha}
  \int_{0}^{R}r^{\alpha-1}\left(\int_{0}^{\frac{R^{2}}{r^{2}}-1}\frac{b^{\frac{\alpha}{2}-1}}{(1+b)^{\frac{n}{2}}}db\right)\overline{v}_{0}(r)dr \\
 \nonumber &\geq&C\int_{r_{1}}^{\frac{R}{2}}r^{2k+\alpha-1}dr-\widetilde{C}\int_{0}^{r_{1}}r^{\alpha-1}|\overline{v}_{0}(r)|dr
  +C\int_{\frac{R}{2}}^{R}r^{2k+\alpha-1}\left(\frac{R^{2}}{r^{2}}-1\right)^{\frac{\alpha}{2}}dr \\
  \nonumber &\geq& CR^{2k+\alpha}-\widetilde{C}.
\end{eqnarray}
Thus there exists a $r_{2}>2r_{1}$ large enough such that
\begin{equation}\label{2-17}
\int_{R}^{+\infty}\frac{R^{\alpha}\overline{u}(r)}{r\left(r^{2}-R^{2}\right)^{\frac{\alpha}{2}}}dr\geq CR^{2k+\alpha}, \qquad \forall \,\, R>r_{2}.
\end{equation}
Since $u\in\mathcal{L}_{\alpha}(\mathbb{R}^{n})$, we have
\begin{equation}\label{2-18}
  \int_{|x-x_{0}|>1}\frac{u(x)}{|x-x_{0}|^{n+\alpha}}dx=C\int^{+\infty}_{1}\frac{\overline{u}(r)}{r^{1+\alpha}}dr<+\infty,
\end{equation}
and hence, for any $\delta>0$,
\begin{eqnarray}\label{2-19}
  &&\int_{1}^{+\infty}\frac{1}{R^{1+\alpha+\delta}}\int_{R}^{+\infty}\frac{R^{\alpha}\overline{u}(r)}{r(r^{2}-R^{2})^{\frac{\alpha}{2}}}drdR
  =\int_{1}^{+\infty}\frac{\overline{u}(r)}{r}\int_{1}^{r}\frac{1}{R^{1+\delta}(r^{2}-R^{2})^{\frac{\alpha}{2}}}dRdr \\
 \nonumber &\leq&C\int_{1}^{+\infty}\frac{\overline{u}(r)}{r^{1+\alpha}}\int_{1}^{\frac{r}{2}}\frac{1}{R^{1+\delta}}dRdr
  +C\int_{1}^{+\infty}\frac{\overline{u}(r)}{r^{2+\delta}}\int^{r}_{\frac{r}{2}}\frac{1}{r^{\frac{\alpha}{2}}(r-R)^{\frac{\alpha}{2}}}dRdr \\
 \nonumber &\leq& C_{\delta}\int_{1}^{+\infty}\frac{\overline{u}(r)}{r^{1+\alpha}}dr+C\int_{1}^{+\infty}\frac{\overline{u}(r)}{r^{1+\alpha}}dr<+\infty,
\end{eqnarray}
which is a contradiction with \eqref{2-17} and thus $k=0$.

Since $k=0$, we deduce that
\begin{equation}\label{2-20}
  \overline{v_{0}}(r)\leq\overline{v_{0}}(0):=-c_{0}<0, \qquad \forall \,\, r>0.
\end{equation}
Thus \eqref{2-10}, \eqref{2-11}, \eqref{2-15} and \eqref{2-20} yield that, for any $R>0$,
\begin{eqnarray}\label{2-21}
 && \int_{R}^{+\infty}\frac{R^{\alpha}\overline{u}(r)}{r(r^{2}-R^{2})^{\frac{\alpha}{2}}}dr\geq C\int^{\frac{R}{2}}_{0}r^{\alpha-1}dr
 +C\int^{R}_{\frac{R}{2}}r^{\alpha-1}\left(\frac{R^{2}}{r^{2}}-1\right)^{\frac{\alpha}{2}}dr \\
 \nonumber &\geq&CR^{\alpha}+C\int_{\frac{R}{2}}^{R}R^{\frac{\alpha}{2}-1}\left(R-r\right)^{\frac{\alpha}{2}}dr\geq CR^{\alpha}.
\end{eqnarray}
Since $u\in\mathcal{L}_{\alpha}(\mathbb{R}^{n})$, we have
\begin{equation}\label{2-22}
  \int_{|x-x_{0}|>N}\frac{u(x)}{|x-x_{0}|^{n+\alpha}}dx=C\int^{+\infty}_{N}\frac{\overline{u}(r)}{r^{1+\alpha}}dr=o_{N}(1)
\end{equation}
as $N\rightarrow+\infty$, and hence
\begin{equation}\label{2-23}
  \int_{2R}^{+\infty}\frac{R^{\alpha}\overline{u}(r)}{r(r^{2}-R^{2})^{\frac{\alpha}{2}}}dr\leq CR^{\alpha}\int_{2R}^{+\infty}\frac{\overline{u}(r)}{r^{1+\alpha}}dr=o_{2R}(1)R^{\alpha}
\end{equation}
as $R\rightarrow+\infty$. We can choose $R_{0}>0$ sufficiently large such that, $o_{2R}(1)<\frac{C}{2}$ for any $R>R_{0}$ with the same constant $C$ as in the RHS of \eqref{2-21}. Consequently, it follows from \eqref{2-21} and \eqref{2-23} that
\begin{equation}\label{2-25}
\int_{R}^{2R}\frac{R^{\alpha}\overline{u}(r)}{r\left(r^{2}-R^{2}\right)^{\frac{\alpha}{2}}}dr>C R^{\alpha}, \qquad \forall \,\, R>R_{0}.
\end{equation}
By \eqref{2-18}, we arrive at
\begin{eqnarray}\label{2-26}
  &&\int_{1}^{+\infty}\frac{1}{R^{1+\alpha}}\int_{R}^{2R}\frac{R^{\alpha}\overline{u}(r)}{r(r^{2}-R^{2})^{\frac{\alpha}{2}}}drdR
  =\int_{1}^{+\infty}\frac{\overline{u}(r)}{r}\int_{\frac{r}{2}}^{r}\frac{1}{R(r^{2}-R^{2})^{\frac{\alpha}{2}}}dRdr \\
 \nonumber &\leq&C\int_{1}^{+\infty}\frac{\overline{u}(r)}{r^{2+\frac{\alpha}{2}}}\int^{r}_{\frac{r}{2}}\frac{1}{\left(r-R\right)^{\frac{\alpha}{2}}}dRdr
 \leq C\int_{1}^{+\infty}\frac{\overline{u}(r)}{r^{1+\alpha}}dr<+\infty,
\end{eqnarray}
which is a contradiction with \eqref{2-25}. Therefore, the super poly-harmonic properties in Theorem \ref{Thm0} holds and hence Theorem \ref{Thm0} is proved.

\section{Proof of Theorem \ref{Thm1}}
In this section, we show sub poly-harmonic properties for nonnegative classical solutions to equations \eqref{PDE} with $f\leq0$, i.e. Theorem \ref{Thm1}.

Suppose that $u$ is a nonnegative classical solution to \eqref{PDE}. Let $u_1(x):=(-\Delta)^{\frac{\alpha}{2}}u(x)$ and $u_i(x):=(-\Delta)^{i-1}u_1(x)$ for $i=2,\cdots, m$. Then, from equations \eqref{PDE}, we have
\begin{equation}\label{2-1s}
\left\{{\begin{array}{l} {(-\Delta)^{\frac{\alpha}{2}} u(x)=u_1(x) \quad \text{in} \,\, \mathbb{R}^{n},} \\  {} \\ {-\Delta u_{1}(x)=u_{2}(x) \quad \text{in} \,\, \mathbb{R}^{n},} \\ \cdots\cdots \\ {-\Delta u_{m}(x)=f\leq0 \quad \text{in} \,\, \mathbb{R}^{n}.} \\ \end{array}}\right.
\end{equation}
Our aim is to prove that $u_i\leq 0$ in $\mathbb{R}^n$ for every $i=1,\cdots,m$.

First, we will prove $u_m\leq0$ by contradiction arguments. If not, then there exists $x_0\in\mathbb{R}^n$ such that $u_m(x_0)>0$. By taking spherical average w.r.t. center $x_{0}$ to all equations except the first equation in \eqref{2-1s}, we have
\begin{equation}\label{2-9s}
\left\{{\begin{array}{l} {{(-\Delta)}^{\frac{\alpha}{2}}u(x)=u_1(x) \quad \text{in} \,\, \mathbb{R}^{n},} \\  {} \\ {-\Delta\overline{u_{1}}(r)=\overline{u_{2}}(r), \quad \forall \, r\geq0,} \\ \cdots\cdots \\ {-\Delta\overline{u_{m}}(r)=\overline{f}(r)\leq0, \quad \forall \, r\geq0.} \\ \end{array}}\right.
\end{equation}
From last equation of \eqref{2-9s}, one has
\begin{equation}\label{2-4s}
-\frac{1}{r^{n-1}}\Big(r^{n-1}\overline{u_{m}}\,'(r)\Big)'\leq0, \qquad \forall \, r\geq0.
\end{equation}
Integrating both sides of \eqref{2-4s} twice gives
\begin{equation}\label{2-5s}
 \overline{u_{m}}(r)\geq\overline{u_{m}}(0)=u_{m}(x_{0})=:c_{0}>0,
\end{equation}
for any $r\geq0$. Then from the last but one equation of \eqref{2-9s} we derive
\begin{equation}\label{2-6s}
-\frac{1}{r^{n-1}}\Big(r^{n-1}\overline{u_{m-1}}\,'(r)\Big)'\geq c_0, \qquad \forall \, r\geq0.
\end{equation}
Again, by integrating both sides of \eqref{2-6s} twice, we arrive at
\begin{equation}\label{2-7s}
\overline{u_{m-1}}(r)\leq\overline{u_{m-1}}(0)-c_1 r^2, \qquad \forall \, r\geq0,
\end{equation}
where $c_1=\frac{c_0}{2n}>0$. Continuing this way, we finally obtain that
\begin{equation}\label{2-19s}
(-1)^{m-1}\overline{u_1}(r)\geq a_{m-1}r^{2(m-1)}+\cdots+a_0, \qquad \forall \, r\geq0,
\end{equation}
where $a_{m-1}>0$. Hence, we have
\begin{equation}\label{2-10s}
{(-1)}^{m-1}\overline{u_1}(r)\geq Cr^{2(m-1)},
\end{equation}
for any $r>R_0$ with $R_0$ sufficiently large. One should observe that it is very difficult to take spherical average to the first equation in \eqref{2-9s}, since the fractional Laplacian $(-\Delta)^{\frac{\alpha}{2}}$ is a nonlocal operator. Instead of taking spherical average, we will apply the Green-Poisson representation formulae for $(-\Delta)^{\frac{\alpha}{2}}$ to the first equation of \eqref{2-9s} to overcome this difficulty. From the first equation in \eqref{2-9s}, we conclude that, for arbitrary $R>0$,
\begin{equation}\label{2-20s}
  u(x)=\int_{B_{R}(x_{0})}G^\alpha_R(x,y)u_{1}(y)dy+\int_{|y-x_{0}|>R}P^{\alpha}_{R}(x,y)u(y)dy, \qquad \forall \,\, x\in B_{R}(x_{0}),
\end{equation}
where $G^\alpha_R$ is the Green's function for $(-\Delta)^{\frac{\alpha}{2}}$ with $0<\alpha<2$ on $B_R(x_{0})$ and $P^{\alpha}_{R}(x,y)$ is the Poisson kernel for $(-\Delta)^{\frac{\alpha}{2}}$ in $B_{R}(x_{0})$. Taking $x=x_{0}$ in \eqref{2-20s} gives
\begin{align}\label{2-11s}
u(x_{0})&=\int_{B_R(x_{0})}\frac{C_{n,\alpha}}{|y-x_{0}|^{n-\alpha}}\left(\int_0^{\frac{R^2}{|y-x_{0}|^2}-1}\frac{b^{\frac{\alpha}{2}-1}}{(1+b)^{\frac{n}{2}}}db\right)u_1(y)dy \\
\nonumber &\quad +C'_{n,\alpha}\int_{|y-x_{0}|>R}\frac{R^\alpha}{{(|y-x_{0}|^2-R^2)}^{\frac{\alpha}{2}}}\frac{u(y)}{|y-x_{0}|^n}dy\\
&=\int_{0}^{R}\frac{\widetilde{C}_{n,\alpha}}{r^{1-\alpha}}\left(\int_0^{\frac{R^2}{r^2}-1} \frac{b^{\frac{\alpha}{2}-1}}{(1+b)^{\frac{n}{2}}}db\right)\overline{u_1}(r)dr +\overline{C}_{n,\alpha}\int_{R}^{+\infty}\frac{R^\alpha\bar{u}(r)}{r{(r^2-R^2)}^{\frac{\alpha}{2}}}dr. \nonumber
\end{align}
One can easily observe that for $0<r\leq\frac{R}{2}$, $\frac{R^2}{r^2}-1\geq3$ and hence $\int_{0}^{\frac{R^2}{r^2}-1}\frac{b^{\frac{\alpha}{2}-1}}{(1+b)^{\frac{n}{2}}}db\geq\int_{0}^{3}\frac{b^{\frac{\alpha}{2}-1}}{(1+b)^{\frac{n}{2}}}db=:C_1>0$. For $\frac{R}{2}<r<R$, one has $0<\frac{R^2}{r^2}-1<3$, thus $\int_{0}^{\frac{R^2}{r^2}-1}\frac{b^{\frac{\alpha}{2}-1}}{(1+b)^{\frac{n}{2}}}db>\int_{0}^{\frac{R^2}{r^2}-1}\frac{b^{\frac{\alpha}{2}-1}}{2^n}db
=:C_2{\left(\frac{R^2-r^2}{r^2}\right)}^{\frac{\alpha}{2}}$. Thus, we conclude that
\begin{equation}\label{2-12s}
\int_0^{\frac{R^2}{r^2}-1}\frac{b^{\frac{\alpha}{2}-1}}{(1+b)^{\frac{n}{2}}}db\geq C_1\chi_{0<r\leq\frac{R}{2}}+C_2\chi_{\frac{R}{2}<r<R}{\left(\frac{R^2-r^2}{r^2}\right)}^{\frac{\alpha}{2}}
\end{equation}
for any $0<r<R$. Then, from \eqref{2-10s}, \eqref{2-11s} and \eqref{2-12s}, we derive that, for $R>2R_0$,
\begin{align}\label{2-13s}
&\quad{(-1)}^{m}\int_{R}^{+\infty}\frac{R^\alpha\bar{u}(r)}{r{(r^2-R^2)}^{\frac{\alpha}{2}}} dr \\
&=C\int_0^R \frac{1}{r^{1-\alpha}}\left(\int_0^{\frac{R^2}{r^2}-1}\frac{b^{\frac{\alpha}{2}-1}}{(1+b)^{\frac{n}{2}}}db\right) {(-1)}^{m-1}\overline{u_1}(r)dr+{(-1)}^{m}u(x_{0})\nonumber\\
&\geq C\int_{R_0}^R \frac{1}{r^{1-\alpha}}\left[C_1 \chi_{0<r<\frac{R}{2}}
+C_2\chi_{\frac{R}{2}<r<R}{\left(\frac{R^2-r^2}{r^2}\right)}^{\frac{\alpha}{2}}\right] r^{2(m-1)}dr\nonumber \\
&\,\,\,\,\,\,\,+C\int_{0}^{R_0}\frac{1}{r^{1-\alpha}}\left(\int_{0}^{\frac{R^2}{r^2}-1}\frac{b^{\frac{\alpha}{2}-1}}{(1+b)^{\frac{n}{2}}}db\right)
(-1)^{m-1}\overline{u_1}(r)dr+{(-1)}^{m}u(x_{0}) \nonumber\\
&\geq CR^{2m-2+\alpha}-C_0+{(-1)}^{m}u(x_{0})\nonumber.
\end{align}
It is easy to see that if $m$ is odd, \eqref{2-13s} implies that $\int_{R}^{+\infty}\frac{R^\alpha\bar{u}(r)}{r{(r^2-R^2)}^{\frac{\alpha}{2}}}dr<0$ for $R$ sufficiently large, which contradicts with $u\geq0$. Therefore, we only need to consider the case that $m$ is even. In such cases, \eqref{2-13s} implies
\begin{equation}\label{2-14s}
\int_{R}^{+\infty}\frac{R^\alpha\bar{u}(r)}{r{(r^2-R^2)}^{\frac{\alpha}{2}}}dr\geq CR^{2m-2+\alpha}
\end{equation}
for $R$ sufficiently large. Since $u\in \mathcal{L}_{\alpha}(\mathbb{R}^{n})$, we have
\begin{equation} \label{2-15s}
\int_{|x-x_{0}|>1}\frac{u(x)}{|x-x_{0}|^{n+\alpha}}dx=C\int_1^{+\infty} \frac{\bar{u}(r)}{r^{1+\alpha}}dr<+\infty.
\end{equation}
Then, by \eqref{2-15s} and the fact that $2m-2\geq0$, for $R$ sufficiently large, we have
\begin{align}\label{2-16s}
\int_{R}^{2R}\frac{R^\alpha\bar{u}(r)}{r{(r^2-R^2)}^{\frac{\alpha}{2}}}dr
&\geq CR^{2m-2+\alpha}-\int_{2R}^{+\infty}\frac{R^\alpha\bar{u}(r)}{r{(r^2-R^2)}^{\frac{\alpha}{2}}} dr\\
&\geq CR^{2m-2+\alpha}-C'R^\alpha\int_{2R}^{+\infty} \frac{\bar{u}(r)}{r^{1+\alpha}}dr \nonumber\\
&\geq CR^{2m-2+\alpha}-o_R(1)R^{\alpha}\nonumber\\
&\geq CR^{2m-2+\alpha}.\nonumber
\end{align}
On the one hand, by \eqref{2-15s}, we have
\begin{align}\label{2-17s}
\int_1^{+\infty}\frac{1}{R^{1+\alpha}}\int_R^{2R}\frac{R^{\alpha}\bar{u}(r)}{r{(r^2-R^2)}^{\frac{\alpha}{2}}}drdR
&=\int_1^{+\infty}\frac{\bar{u}}{r}\int_{\frac{r}{2}}^{r}\frac{1}{R{(r^2-R^2)}^{\frac{\alpha}{2}}}dRdr\\
&\leq C\int_1^{+\infty}\frac{\bar{u}(r)}{r^{1+\alpha}}dr<+\infty.\nonumber
\end{align}
On the other hand, by \eqref{2-16s}, we derive
\begin{equation}\label{2-18s}
\int_1^{+\infty}\frac{1}{R^{1+\alpha}}\int_R^{2R}\frac{R^{\alpha}\bar{u}(r)}{r{(r^2-R^2)}^{\frac{\alpha}{2}}} drdR\geq\int_{N}^{+\infty}\frac{1}{R}dR=+\infty,
\end{equation}
where $N$ is sufficiently large such that \eqref{2-16s} holds for any $R>N$. Combining \eqref{2-17s} with \eqref{2-18s}, we get a contradiction. Hence, we must have $u_m\leq 0$ in $\mathbb{R}^n$. One should observe that, in the proof of $u_{m}\leq0$, we have mainly used the property $-\Delta u_m\leq 0$. Therefore, through a similar argument as above, one can prove that $u_{m-1}\leq 0$. Continuing this way, we obtain that $u_i=(-\Delta)^{i-1+\frac{\alpha}{2}}u\leq0$ for every $i=1,2,\cdots,m$. This completes the proof of Theorem \ref{Thm1}.

\section{Proof of Theorem \ref{Thm2}}
In this section, we will prove Theorem \ref{Thm2} by way of contradiction  and the method of scaling spheres developed by Dai and Qin \cite{DQ2} (see also \cite{DQ3,DQ4,DQZ}). For more related literature on the method of moving planes (spheres), we refer to \cite{CD,CDQ,CF,CFY,CGS,CL0,CL3,CL1,CLL,CLO,DFHQW,DL,DPQ,DQ1,DQ5,GNN,LiC,Lin,Li,LZ,WX,Xu} and the references therein.

Now suppose, on the contrary, that $u\geq0$ satisfies integral equations \eqref{IE} but $u$ is not identically zero, then there exists a ponit $\bar{x}\in\mathbb{R}^{n}$ such that $u(\bar{x})>0$. It follows from \eqref{IE} immediately that
\begin{equation}\label{3-0-2}
  u(x)>0, \,\,\,\,\,\,\, \forall \,\, x\in\mathbb{R}^{n},
\end{equation}
i.e., $u$ is actually a positive solution in $\mathbb{R}^{n}$. Moreover, there exists a constant $C>0$, such that the solution $u$ satisfies the following lower bound:
\begin{equation}\label{3-1}
  u(x)\geq\frac{C}{|x|^{n-2m-\alpha}} \,\,\,\,\,\,\,\,\,\,\,\,\, \text{for} \,\,\, |x|\geq1.
\end{equation}
Indeed, since $u>0$ satisfies the integral equation \eqref{IE}, we can infer that
\begin{eqnarray}\label{3-2}
  u(x)&\geq& C_{n,m,\alpha}\int_{|y|\leq\frac{1}{2}}\frac{|y|^{a}}{|x-y|^{n-2m-\alpha}}u^{p}(y)dy\\
 \nonumber &\geq& \frac{C}{|x|^{n-2m-\alpha}}\int_{|y|\leq\frac{1}{2}}|y|^{a}u^{p}(y)dy=:\frac{C}{|x|^{n-2m-\alpha}}
\end{eqnarray}
for all $|x|\geq1$.

Next, we will apply the method of scaling spheres to show the following lower bound estimates for positive solution $u$, which contradict with the integral equations \eqref{IE} for $0<p<\frac{n+2m+\alpha+2a}{n-2m-\alpha}$.
\begin{thm}\label{lower1}
Assume $m\geq1$, $0<\alpha<2$, $2m+\alpha<n$, $0\leq a<+\infty$, $0<p<\frac{n+2m+\alpha+2a}{n-2m-\alpha}$. Suppose $u\in C(\mathbb{R}^{n})$ is a positive solution to \eqref{IE}, then it satisfies the following lower bound estimates: for $|x|\geq1$,
\begin{equation}\label{lb1-1}
  u(x)\geq C_{\kappa}|x|^{\kappa} \quad\quad \forall \, \kappa<\frac{2m+\alpha+a}{1-p}, \quad\quad \text{if} \,\,\,\, 0<p<1;
\end{equation}
\begin{equation}\label{lb2-1}
  u(x)\geq C_{\kappa}|x|^{\kappa} \quad\quad \forall \, \kappa<+\infty, \quad\quad \text{if} \,\,\,\, 1\leq p<\frac{n+2m+\alpha+2a}{n-2m-\alpha}.
\end{equation}
\end{thm}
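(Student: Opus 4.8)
The plan is to prove the lower bound by the method of scaling spheres of Dai--Qin \cite{DQ2}, followed by a bootstrap of the growth exponent through the integral equation \eqref{IE}. Write $\gamma:=2m+\alpha$ and introduce the Kelvin transform of $u$ centered at the origin,
\[
u_{\lambda}(x):=\Big(\frac{\lambda}{|x|}\Big)^{n-\gamma}u\Big(\frac{\lambda^{2}x}{|x|^{2}}\Big),\qquad \lambda>0,\ x\neq0 .
\]
Starting from \eqref{IE} and the change of variables $y\mapsto y^{\lambda}:=\lambda^{2}y/|y|^{2}$, which interchanges $B_{\lambda}(0)$ with its complement, one derives an identity of the form
\[
w_{\lambda}(x):=u_{\lambda}(x)-u(x)=R_{2m+\alpha,n}\int_{B_{\lambda}(0)}G_{\lambda}(x,y)\Big[\lambda^{\tau}|y|^{a-\tau}u_{\lambda}^{p}(y)-|y|^{a}u^{p}(y)\Big]\,dy,
\]
where $G_{\lambda}(x,y)=|x-y|^{-(n-\gamma)}-(\lambda/|y|)^{n-\gamma}|x-y^{\lambda}|^{-(n-\gamma)}>0$ for $x,y\in B_{\lambda}(0)$, and $\tau:=n+\gamma+2a-(n-\gamma)p>0$ \emph{precisely because} $p<p_{c}(a)$. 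Since $|y|<\lambda$ on $B_{\lambda}(0)$ and $\tau>0$, we have $(\lambda/|y|)^{\tau}\geq1$, so the defect term carries the favorable sign: whenever $u_{\lambda}\geq u$ on $B_{\lambda}(0)$, the bracket is nonnegative.

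I would first run the scaling spheres. With $\Sigma_{\lambda}^{-}:=\{x\in B_{\lambda}(0):w_{\lambda}(x)<0\}$, a standard Hardy--Littlewood--Sobolev narrow-region estimate gives $\Sigma_{\lambda}^{-}=\emptyset$ for $\lambda$ small (indeed $u_{\lambda}\to+\infty$ near $0$, so $\Sigma_{\lambda}^{-}$ has small measure, and the a priori bound \eqref{3-1} controls the relevant weights near the origin while continuity of $u>0$ controls them away from it). Set $\lambda_{0}:=\sup\{\lambda>0:w_{\mu}\geq0\text{ on }B_{\mu}(0)\setminus\{0\}\text{ for all }\mu<\lambda\}$. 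I claim $\lambda_{0}=+\infty$. If not, continuity gives $w_{\lambda_{0}}\geq0$ on $B_{\lambda_{0}}(0)\setminus\{0\}$; by the identity and the sign of the defect term the bracket is nonnegative, and it cannot vanish identically (otherwise $w_{\lambda_{0}}\equiv0$, impossible since $w_{\lambda_{0}}\to+\infty$ at $0$), so a strong-maximum-principle argument upgrades this to $w_{\lambda_{0}}>0$ strictly on $B_{\lambda_{0}}(0)\setminus\{0\}$. Then, for $\lambda$ slightly larger than $\lambda_{0}$, $\Sigma_{\lambda}^{-}$ is confined to a thin shell near $\partial B_{\lambda_{0}}$ on which $u$ is bounded away from $0$ and $\infty$, so the narrow-region estimate again forces $\Sigma_{\lambda}^{-}=\emptyset$, contradicting the maximality of $\lambda_{0}$. \emph{This is the step where subcriticality is essential}: for $\tau\leq0$ the defect term has the wrong sign and the argument breaks down.

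From $w_{\lambda}\geq0$ on $B_{\lambda}(0)$ for every $\lambda$, which by the Kelvin duality $(u_{\lambda})_{\lambda}=u$ is equivalent to $u_{\lambda}(x)\leq u(x)$ for $|x|\geq\lambda$, one reads off $u(x)\geq(\lambda/|x|)^{n-\gamma}u(\lambda^{2}x/|x|^{2})$ for all $0<\lambda\leq|x|$; choosing $\lambda=|x|^{1/2}$ and using $\inf_{\overline{B_{1}}}u>0$ (by \eqref{3-0-2} and continuity) yields the first improved bound $u(x)\geq C|x|^{-(n-\gamma)/2}$ for $|x|\geq1$. I would then bootstrap through \eqref{IE}: if $u(x)\geq C_{k}|x|^{\mu_{k}}$ for $|x|\geq1$, restricting the integral in \eqref{IE} to $\{1\leq|y|\leq|x|/2\}$, on which $|x-y|\sim|x|$, gives $u(x)\geq C_{k+1}|x|^{\mu_{k+1}}$ with $\mu_{k+1}=a+\gamma+p\mu_{k}$, \emph{provided} $a+p\mu_{k}+n>0$. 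Since $p<p_{c}(a)$ forces $p(n-\gamma)/2<n+a$, this condition already holds at $\mu_{1}=-(n-\gamma)/2$, hence (as $\mu_{k}$ turns out increasing) at every step --- this is exactly why the preliminary scaling-spheres jump past the threshold $p=\tfrac{n+a}{n-\gamma}$ is needed, the naive bootstrap from $\mu_{0}=\gamma-n$ stalling for $p\in[\tfrac{n+a}{n-\gamma},p_{c}(a))$. When $0<p<1$ the affine recursion $\mu_{k+1}=p\mu_{k}+(a+\gamma)$ increases monotonically to the fixed point $\tfrac{a+\gamma}{1-p}=\tfrac{2m+\alpha+a}{1-p}$, giving \eqref{lb1-1}; when $1\leq p<p_{c}(a)$ one checks $-(n-\gamma)/2>-\tfrac{a+\gamma}{p-1}$ --- again equivalent to $p<p_{c}(a)$ --- so $\mu_{k+1}-\mu_{k}=a+\gamma+(p-1)\mu_{k}>0$ and $\mu_{k}\to+\infty$, giving \eqref{lb2-1}. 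On any bounded annulus $\{1\leq|x|\leq R\}$ the bound follows from positivity and continuity.

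The main obstacle is the scaling-spheres step, and within it the rigorous verification of the integral identity --- assembling all the Kelvin-transformed kernels and Jacobians and, in particular, checking the integrability near the origin of $B_{\lambda}(0)$, which hinges on the decay at infinity encoded in $u(x)<\infty$ --- together with the strong maximum principle in this nonlocal integral setting and the uniform control of $\Sigma_{\lambda}^{-}$ as $\lambda\to\lambda_{0}^{+}$ through the sharp $L^{n/\gamma}$ narrow-region estimate. The subsequent bootstrap is routine once the exponent $-(n-\gamma)/2$ is in hand.
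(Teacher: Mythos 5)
Your proposal is correct and follows essentially the same route as the paper: Kelvin transform, scaling spheres with a Hardy--Littlewood--Sobolev narrow-region estimate to reach $\lambda_0=+\infty$, the choice $\lambda=\sqrt{|x|}$ to obtain the preliminary exponent $-(n-2m-\alpha)/2$, and then a bootstrap through \eqref{IE} with the affine recursion $\mu_{k+1}=p\mu_k+(a+2m+\alpha)$. The only noteworthy cosmetic difference is in the bootstrap region: the paper integrates over the annulus $\{2|x|\le|y|\le4|x|\}$, which sidesteps any integrability check, whereas you integrate over $\{1\le|y|\le|x|/2\}$ and therefore correctly need to verify (and do verify) that $n+a+p\mu_k>0$ at each step, which holds precisely because the scaling-spheres step has already lifted the exponent past $-(n-2m-\alpha)/2$.
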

\begin{proof}
Given any $\lambda>0$, we first define the Kelvin transform of a function $u:\,\mathbb{R}^{n}\rightarrow\mathbb{R}$ centered at $0$ by
\begin{equation}\label{Kelvin1}
  u_{\lambda}(x)=\left(\frac{\lambda}{|x|}\right)^{n-2m-\alpha}u\left(\frac{\lambda^{2}x}{|x|^{2}}\right)
\end{equation}
for arbitrary $x\in\mathbb{R}^{n}\setminus\{0\}$. It's obvious that the Kelvin transform $u_{\lambda}$ may have singularity at $0$ and $\lim_{|x|\rightarrow\infty}|x|^{n-2m-\alpha}u_{\lambda}(x)=\lambda^{n-2m-\alpha}u(0)>0$. By \eqref{Kelvin1}, one can infer from the regularity assumptions on $u$ that $u_{\lambda}\in C(\mathbb{R}^{n}\setminus\{0\})$.

Next, we will carry out the process of scaling spheres with respect to the origin $0\in\mathbb{R}^{n}$.

To this end, let $\lambda>0$ be an arbitrary positive real number and let
\begin{equation}\label{3-1-0}
  \omega^{\lambda}(x):=u_{\lambda}(x)-u(x)
\end{equation}
for any $x\in B_{\lambda}(0)\setminus\{0\}$. We will first show that, for $\lambda>0$ sufficiently small,
\begin{equation}\label{3-7}
  \omega^{\lambda}(x)\geq0, \,\,\,\,\,\, \forall \,\, x\in B_{\lambda}(0)\setminus\{0\}.
\end{equation}
Then, we start dilating the sphere $S_{\lambda}$ from a place near the origin $0$ outward as long as \eqref{3-7} holds, until its limiting position $\lambda=+\infty$ and derive lower bound estimates on $u$. Therefore, the scaling sphere process can be divided into two steps.

\emph{Step 1. Start dilating the sphere from near $\lambda=0$.} Define
\begin{equation}\label{3-8}
  B^{-}_{\lambda}:=\{x\in B_{\lambda}(0)\setminus\{0\} \, | \, \omega^{\lambda}(x)<0\}.
\end{equation}
We will show that, for $\lambda>0$ sufficiently small,
\begin{equation}\label{3-9}
  B^{-}_{\lambda}=\emptyset.
\end{equation}

Since $u\in C(\mathbb{R}^{n})$ is a positive solution to integral equations \eqref{IE}, through direct calculations, we get
\begin{equation}\label{3-10}
  u(x)=C\int_{B_{\lambda}(0)}\frac{|y|^{a}}{|x-y|^{n-2m-\alpha}}u^{p}(y)dy
  +C\int_{B_{\lambda}(0)}\frac{|y|^{a}}{\left|\frac{|y|}{\lambda}x-\frac{\lambda}{|y|}y\right|^{n-2m-\alpha}}
  \left(\frac{\lambda}{|y|}\right)^{\tau}u_{\lambda}^{p}(y)dy
\end{equation}
for any $x\in\mathbb{R}^{n}$, where $\tau:=n+2m+\alpha+2a-p(n-2m-\alpha)>0$. Direct calculations deduce that $u_{\lambda}$ satisfies the following integral equation
\begin{equation}\label{3-11}
  u_{\lambda}(x)=C\int_{\mathbb{R}^{n}}\frac{|y|^{a}}{|x-y|^{n-2m-\alpha}}\left(\frac{\lambda}{|y|}\right)^{\tau}u_{\lambda}^{p}(y)dy
\end{equation}
for any $x\in\mathbb{R}^{n}\setminus\{0\}$, and hence, it follows immediately that
\begin{equation}\label{3-12}
  u_{\lambda}(x)=C\int_{B_{\lambda}(0)}\frac{|y|^{a}}{\left|\frac{|y|}{\lambda}x-\frac{\lambda}{|y|}y\right|^{n-2m-\alpha}}u^{p}(y)dy
  +C\int_{B_{\lambda}(0)}\frac{|y|^{a}}{|x-y|^{n-2m-\alpha}}
  \left(\frac{\lambda}{|y|}\right)^{\tau}u_{\lambda}^{p}(y)dy.
\end{equation}
From the integral equations \eqref{3-10} and \eqref{3-12}, one can derive that, for any $x\in B_{\lambda}^{-}$,
\begin{eqnarray}\label{3-13}
  &&0>\omega^{\lambda}(x)=u_{\lambda}(x)-u(x) \\
 \nonumber &=&C\int_{B_{\lambda}(0)}\Bigg(\frac{|y|^{a}}{|x-y|^{n-2m-\alpha}}-\frac{|y|^{a}}{\left|\frac{|y|}{\lambda}x-\frac{\lambda}{|y|}y\right|^{n-2m-\alpha}}\Bigg) \left(\left(\frac{\lambda}{|y|}\right)^{\tau}u_{\lambda}^{p}(y)-u^{p}(y)\right)dy\\
\nonumber &>& C\int_{B_{\lambda}^{-}}\Bigg(\frac{|y|^{a}}{|x-y|^{n-2m-\alpha}}-\frac{|y|^{a}}{\left|\frac{|y|}{\lambda}x-\frac{\lambda}{|y|}y\right|^{n-2m-\alpha}}\Bigg)
\max\left\{u^{p-1}(y),u_{\lambda}^{p-1}(y)\right\}\omega^{\lambda}(y)dy\\
\nonumber &\geq& C\int_{B_{\lambda}^{-}}\frac{|y|^{a}}{|x-y|^{n-2m-\alpha}}\max\left\{u^{p-1}(y),u_{\lambda}^{p-1}(y)\right\}\omega^{\lambda}(y)dy.
\end{eqnarray}

By Hardy-Littlewood-Sobolev inequality and \eqref{3-13}, we have, for any $\frac{n}{n-2m-\alpha}<q<\infty$,
\begin{eqnarray}\label{3-14}
  \|\omega^{\lambda}\|_{L^{q}(B^{-}_{\lambda})}&\leq& C\left\||x|^{a}\max\left\{u^{p-1},u_{\lambda}^{p-1}\right\}\omega^{\lambda}\right\|_{L^{\frac{nq}{n+(2m+\alpha)q}}(B^{-}_{\lambda})}\\
  \nonumber &\leq& C\left\||x|^{a}\max\left\{u^{p-1},u_{\lambda}^{p-1}\right\}\right\|_{L^{\frac{n}{2m+\alpha}}(B^{-}_{\lambda})}\|\omega^{\lambda}\|_{L^{q}(B^{-}_{\lambda})}.
\end{eqnarray}
Since \eqref{3-2} implies that
\begin{equation}\label{inf}
  \inf_{x\in B_{\lambda}(0)\setminus\{0\}}u_{\lambda}(x)\geq C
\end{equation}
for any $\lambda\leq1$, there exists a $\epsilon_{0}>0$ small enough, such that
\begin{equation}\label{3-15}
  C\left\||x|^{a}\max\left\{u^{p-1},u_{\lambda}^{p-1}\right\}\right\|_{L^{\frac{n}{2m+\alpha}}(B^{-}_{\lambda})}\leq\frac{1}{2}
\end{equation}
for all $0<\lambda\leq\epsilon_{0}$. Thus \eqref{3-14} implies
\begin{equation}\label{3-16}
  \|\omega^{\lambda}\|_{L^{q}(B^{-}_{\lambda})}=0, \qquad \forall \,\, 0<\lambda\leq\epsilon_{0},
\end{equation}
which means $B^{-}_{\lambda}=\emptyset$. Consequently for all $0<\lambda\leq\epsilon_{0}$,
\begin{equation}\label{3-17}
  \omega^{\lambda}(x)\geq0, \,\,\,\,\,\,\, \forall \, x\in B_{\lambda}(0)\setminus\{0\},
\end{equation}
which completes Step 1.

\emph{Step 2. Dilate the sphere $S_{\lambda}$ outward until $\lambda=+\infty$ to derive lower bound estimates on $u$.} Step 1 provides us a start point to dilate the sphere $S_{\lambda}$ from place near $\lambda=0$. Now we dilate the sphere $S_{\lambda}$ outward as long as \eqref{3-7} holds. Let
\begin{equation}\label{3-18}
  \lambda_{0}:=\sup\{\lambda>0\,|\, \omega^{\mu}\geq0 \,\, in \,\, B_{\mu}(0)\setminus\{0\}, \,\, \forall \, 0<\mu\leq\lambda\}\in(0,+\infty],
\end{equation}
and hence, one has
\begin{equation}\label{3-19}
  \omega^{\lambda_{0}}(x)\geq0, \quad\quad \forall \,\, x\in B_{\lambda_{0}}(0)\setminus\{0\}.
\end{equation}
In what follows, we will prove $\lambda_{0}=+\infty$ by contradiction arguments.

Suppose on contrary that $0<\lambda_{0}<+\infty$. In order to get a contradiction, we will first show that
\begin{equation}\label{3-21}
  \omega^{\lambda_{0}}(x)>0, \,\,\,\,\,\, \forall \, x\in B_{\lambda_{0}}(0)\setminus\{0\}.
\end{equation}
Then, we will obtain a contradiction with \eqref{3-18} via showing that the sphere $S_{\lambda}$ can be dilated outward a little bit further. More precisely, there exists a $\varepsilon>0$ small enough such that $\omega^{\lambda}\geq0$ in $B_{\lambda}(0)\setminus\{0\}$ for all $\lambda\in[\lambda_{0},\lambda_{0}+\varepsilon]$.

Now we start to prove \eqref{3-21}. Indeed, if we suppose that
\begin{equation}\label{3-20}
  \omega^{\lambda_{0}}(x)\equiv0, \,\,\,\,\,\,\forall \, x\in B_{\lambda_{0}}(0)\setminus\{0\},
\end{equation}
then by the second equality in \eqref{3-13} and \eqref{3-20}, we arrive at
\begin{eqnarray}\label{3-34}
 && 0=\omega^{\lambda_{0}}(x)=u_{\lambda_{0}}(x)-u(x)\\
 \nonumber &=&C\int_{B_{\lambda_{0}}(0)}\Bigg(\frac{|y|^{a}}{|x-y|^{n-2m-\alpha}}-\frac{|y|^{a}}{\left|\frac{|y|}{\lambda_{0}}x-\frac{\lambda_{0}}{|y|}y\right|^{n-2m-\alpha}}\Bigg) \left(\left(\frac{\lambda_{0}}{|y|}\right)^{\tau}-1\right)u^{p}(y)dy>0
\end{eqnarray}
for any $x\in B_{\lambda_{0}}(0)\setminus\{0\}$, which is absurd. Thus there exists a point $x^{0}\in B_{\lambda_{0}}(0)\setminus\{0\}$ such that $\omega^{\lambda_{0}}(x^{0})>0$, which implies that by continuity, there exists a small $\delta>0$ and a constant $c_{0}>0$ such that
\begin{equation}\label{3-22}
B_{\delta}(x^{0})\subset B_{\lambda_{0}}(0)\setminus\{0\} \,\,\,\,\,\, \text{and} \,\,\,\,\,\,
\omega^{\lambda_{0}}(x)\geq c_{0}>0, \,\,\,\,\,\,\,\, \forall \, x\in B_{\delta}(x^{0}).
\end{equation}
From \eqref{3-22} and the integral equations \eqref{3-10} and \eqref{3-12}, one can derive that, for any $x\in B_{\lambda_{0}}(0)\setminus\{0\}$,
\begin{eqnarray}\label{3-23}
  &&\omega^{\lambda_{0}}(x)=u_{\lambda_{0}}(x)-u(x) \\
 \nonumber &=&C\int_{B_{\lambda_{0}}(0)}\Bigg(\frac{|y|^{a}}{|x-y|^{n-2m-\alpha}}-\frac{|y|^{a}}{\left|\frac{|y|}{\lambda_{0}}x-\frac{\lambda_{0}}{|y|}y\right|^{n-2m-\alpha}}\Bigg) \left(\left(\frac{\lambda_{0}}{|y|}\right)^{\tau}u_{\lambda_{0}}^{p}(y)-u^{p}(y)\right)dy\\
 \nonumber &>& C\int_{B_{\lambda_{0}}(0)}\Bigg(\frac{|y|^{a}}{|x-y|^{n-2m-\alpha}}-\frac{|y|^{a}}{\left|\frac{|y|}{\lambda_{0}}x-\frac{\lambda_{0}}{|y|}y\right|^{n-2m-\alpha}}\Bigg) \left(u_{\lambda_{0}}^{p}(y)-u^{p}(y)\right)dy\\
 \nonumber &\geq& C\int_{B_{\lambda_{0}}(0)}\Bigg(\frac{|y|^{a}}{|x-y|^{n-2m-\alpha}}-\frac{|y|^{a}}{\left|\frac{|y|}{\lambda_{0}}x-\frac{\lambda_{0}}{|y|}y\right|^{n-2m-\alpha}}\Bigg)
\min\left\{u^{p-1}(y),u_{\lambda_{0}}^{p-1}(y)\right\}\omega^{\lambda_{0}}(y)dy\\
\nonumber &\geq& C\int_{B_{\delta}(x^{0})}\Bigg(\frac{|y|^{a}}{|x-y|^{n-2m-\alpha}}-\frac{|y|^{a}}{\left|\frac{|y|}{\lambda_{0}}x-\frac{\lambda_{0}}{|y|}y\right|^{n-2m-\alpha}}\Bigg)
\min\left\{u^{p-1}(y),u_{\lambda_{0}}^{p-1}(y)\right\}\omega^{\lambda_{0}}(y)dy>0,
\end{eqnarray}
and thus we arrive at \eqref{3-21}. Furthermore, \eqref{3-23} also implies that there exists a $0<\eta<\lambda_{0}$ small enough such that, for any $x\in \overline{B_{\eta}(0)}\setminus\{0\}$,
\begin{equation}\label{3-24}
  \omega^{\lambda_{0}}(x)>c_{4}+C\int_{B_{\frac{\delta}{2}}(x^{0})}c_{3}^{a}\,c_{2}\,c_{1}^{p-1}c_{0} \, dy=:\widetilde{c}_{0}>0.
\end{equation}

Next, we will show that the sphere $S_{\lambda}$ can be dilated outward a little bit further and hence obtain a contradiction with the definition \eqref{3-18} of $\lambda_{0}$.

To this end, we fixed $0<r_{0}<\frac{1}{2}\lambda_{0}$ small enough, such that
\begin{equation}\label{3-25}
  C\left\||x|^{a}\max\left\{u^{p-1},u_{\lambda}^{p-1}\right\}\right\|_{L^{\frac{n}{2m+\alpha}}(A_{\lambda_{0}+r_{0},2r_{0}})}\leq\frac{1}{2}
\end{equation}
for any $\lambda\in[\lambda_{0},\lambda_{0}+r_{0}]$, where the constant $C$ is the same as in \eqref{3-14} and the narrow region
\begin{equation}\label{3-26}
  A_{\lambda_{0}+r_{0},2r_{0}}:=\left\{x\in B_{\lambda_{0}+r_{0}}(0)\,|\,|x|>\lambda_{0}-r_{0}\right\}.
\end{equation}
By \eqref{3-13}, one can easily verify that inequality as \eqref{3-14} (with the same constant $C$) also holds for any $\lambda\in[\lambda_{0},\lambda_{0}+r_{0}]$, that is, for any $\frac{n}{n-2m-\alpha}<q<\infty$,
\begin{equation}\label{3-27}
  \|\omega^{\lambda}\|_{L^{q}(B^{-}_{\lambda})}\leq C\left\||x|^{a}\max\left\{u^{p-1},u_{\lambda}^{p-1}\right\}\right\|_{L^{\frac{n}{2m+\alpha}}(B^{-}_{\lambda})}\|\omega^{\lambda}\|_{L^{q}(B^{-}_{\lambda})}.
\end{equation}
From \eqref{3-21} and \eqref{3-24}, we can infer that
\begin{equation}\label{3-28}
  m_{0}:=\inf_{x\in\overline{B_{\lambda_{0}-r_{0}}(0)}\setminus\{0\}}\omega^{\lambda_{0}}(x)>0.
\end{equation}
Since $u$ is uniformly continuous on arbitrary compact set $K\subset\mathbb{R}^{n}$ (say, $K=\overline{B_{4\lambda_{0}}(0)}$), we can deduce from \eqref{3-28} that, there exists a $0<\varepsilon_{1}<r_{0}$ sufficiently small, such that, for any $\lambda\in[\lambda_{0},\lambda_{0}+\varepsilon_{1}]$,
\begin{equation}\label{3-29}
  \omega^{\lambda}(x)\geq\frac{m_{0}}{2}>0, \,\,\,\,\,\, \forall \, x\in\overline{B_{\lambda_{0}-r_{0}}(0)}\setminus\{0\}.
\end{equation}
In order to prove \eqref{3-29}, one should observe that \eqref{3-28} is equivalent to
\begin{equation}\label{3-50}
  |x|^{n-2m-\alpha}u(x)-\lambda_{0}^{n-2m-\alpha}u(x^{\lambda_{0}})\geq m_{0}\lambda_{0}^{n-2m-\alpha}, \,\,\,\,\,\,\,\,\, \forall \, |x|\geq\frac{\lambda_{0}^{2}}{\lambda_{0}-r_{0}}.
\end{equation}
Since $u$ is uniformly continuous on $\overline{B_{4\lambda_{0}}(0)}$, we infer from \eqref{3-50} that there exists a $0<\varepsilon_{1}<r_{0}$ sufficiently small, such that, for any $\lambda\in[\lambda_{0},\lambda_{0}+\varepsilon_{1}]$,
\begin{equation}\label{2-43}
  |x|^{n-2m-\alpha}u(x)-\lambda^{n-2m-\alpha}u(x^{\lambda})\geq \frac{m_{0}}{2}\lambda^{n-2m-\alpha}, \,\,\,\,\,\,\,\,\, \forall \, |x|\geq\frac{\lambda^{2}}{\lambda_{0}-r_{0}},
\end{equation}
which is equivalent to \eqref{3-29}, hence we have proved \eqref{3-29}.

By \eqref{3-29}, we know that for any $\lambda\in[\lambda_{0},\lambda_{0}+\varepsilon_{1}]$,
\begin{equation}\label{3-30}
  B_{\lambda}^{-}\subset A_{\lambda_{0}+r_{0},2r_{0}},
\end{equation}
and hence, estimates \eqref{3-25} and \eqref{3-27} yields
\begin{equation}\label{3-31}
  \|\omega^{\lambda}\|_{L^{q}(B^{-}_{\lambda})}=0.
\end{equation}
Therefore, for any $\lambda\in[\lambda_{0},\lambda_{0}+\varepsilon_{1}]$, we deduce from \eqref{3-31} that, $B^{-}_{\lambda}=\emptyset$, that is,
\begin{equation}\label{3-32}
  \omega^{\lambda}(x)\geq0, \,\,\,\,\,\,\, \forall \,\, x\in B_{\lambda}(0)\setminus\{0\},
\end{equation}
which contradicts with the definition \eqref{3-18} of $\lambda_{0}$. Thus we must have $\lambda_{0}=+\infty$, that is,
\begin{equation}\label{3-35}
  u(x)\geq\left(\frac{\lambda}{|x|}\right)^{n-2m-\alpha}u\left(\frac{\lambda^{2}x}{|x|^{2}}\right), \quad\quad \forall \,\, |x|\geq\lambda, \quad \forall \,\, 0<\lambda<+\infty.
\end{equation}
For arbitrary $|x|\geq1$, let $\lambda:=\sqrt{|x|}$, then \eqref{3-35} yields that
\begin{equation}\label{3-36}
  u(x)\geq\frac{1}{|x|^{\frac{n-2m-\alpha}{2}}}u\left(\frac{x}{|x|}\right),
\end{equation}
and hence, we arrive at the following lower bound estimate:
\begin{equation}\label{3-37}
  u(x)\geq\left(\min_{x\in S_{1}}u(x)\right)\frac{1}{|x|^{\frac{n-2m-\alpha}{2}}}:=\frac{C_{0}}{|x|^{\frac{n-2m-\alpha}{2}}}, \quad\quad \forall \,\, |x|\geq1.
\end{equation}

The lower bound estimate \eqref{3-37} can be improved remarkably for $0<p<\frac{n+2m+\alpha+2a}{n-2m-\alpha}$ using the ``Bootstrap" iteration technique and the integral equations \eqref{IE}.

In fact, let $\mu_{0}:=\frac{n-2m-\alpha}{2}$, we infer from the integral equations \eqref{IE} and \eqref{3-37} that, for $|x|\geq1$,
\begin{eqnarray}\label{3-38}
  u(x)&\geq&C\int_{2|x|\leq|y|\leq4|x|}\frac{1}{|x-y|^{n-2m-\alpha}|y|^{p\mu_{0}-a}}dy \\
  \nonumber &\geq&\frac{C}{|x|^{n-2m-\alpha}}\int_{2|x|\leq|y|\leq4|x|}\frac{1}{|y|^{p\mu_{0}-a}}dy \\
  \nonumber &\geq&\frac{C}{|x|^{n-2m-\alpha}}\int^{4|x|}_{2|x|}r^{n-1-p\mu_{0}+a}dr \\
  \nonumber &\geq&\frac{C_{1}}{|x|^{p\mu_{0}-(a+2m+\alpha)}}.
\end{eqnarray}
Let $\mu_{1}:=p\mu_{0}-(a+2m+\alpha)$. Due to $0<p<\frac{n+2m+\alpha+2a}{n-2m-\alpha}$, our important observation is
\begin{equation}\label{3-39}
  \mu_{1}:=p\mu_{0}-(a+2m+\alpha)<\mu_{0}.
\end{equation}
Thus we have obtained a better lower bound estimate than \eqref{3-37} after one iteration, that is,
\begin{equation}\label{3-40}
  u(x)\geq\frac{C_{1}}{|x|^{\mu_{1}}}, \quad\quad \forall \,\, |x|\geq1.
\end{equation}

For $k=0,1,2,\cdots$, define
\begin{equation}\label{3-41}
  \mu_{k+1}:=p\mu_{k}-(a+2m+\alpha).
\end{equation}
Since $0<p<\frac{n+2m+\alpha+2a}{n-2m-\alpha}$, it is easy to see that the sequence $\{\mu_{k}\}$ is monotone decreasing with respect to $k$. Repeating the above iteration process involving the integral equation \eqref{IE}, we have the following lower bound estimates for every $k=0,1,2,\cdots$,
\begin{equation}\label{3-42}
  u(x)\geq\frac{C_{k}}{|x|^{\mu_{k}}}, \quad\quad \forall \,\, |x|\geq1.
\end{equation}
Now Theorem \ref{lower1} follows easily from the obvious properties that as $k\rightarrow+\infty$,
\begin{equation}\label{3-43}
 \mu_{k}\rightarrow-\frac{a+2m+\alpha}{1-p} \quad \text{if} \,\, 0<p<1;
  \quad\quad \mu_{k}\rightarrow-\infty \quad \text{if} \,\, 1\leq p<\frac{n+2m+\alpha+2a}{n-2m-\alpha}.
\end{equation}
This finishes our proof of Theorem \ref{lower1}.
\end{proof}

We have proved that a nontrivial nonnegative solution $u$ to integral equations \eqref{IE} is actually a positive solution. For $0<p<\frac{n+2m+\alpha+2a}{n-2m-\alpha}$, the lower bound estimates in Theorem \ref{lower1} contradicts with the following integrability
\begin{equation}\label{3-44}
  C\int_{\mathbb{R}^{n}}\frac{u^{p}(x)}{|x|^{n-2m-\alpha-a}}dx=u(0)<+\infty
\end{equation}
indicated by  integral equations \eqref{IE}. Therefore, $u\equiv0$ in $\mathbb{R}^{n}$, that is, the unique nonnegative solution to IEs \eqref{IE} is $u\equiv0$ in $\mathbb{R}^{n}$. The proof of Theorem \ref{Thm2} is therefore completed.

\section{Proof of Theorem \ref{Thm3}}
In this section, using Theorem \ref{Thm0} and the arguments from Chen, Dai and Qin \cite{CDQ}, we will prove the Liouville properties in Theorem \ref{Thm3} in both critical order cases $m+\frac{\alpha}{2}=\frac{n}{2}$ and super-critical order cases $m+\frac{\alpha}{2}>\frac{n}{2}$.

We will prove Theorem \ref{Thm3} by using contradiction arguments. Suppose on the contrary that $u\geq0$ satisfies equation \eqref{PDE} but $u$ is not identically zero, then there exists a point $\bar{x}\in\mathbb{R}^{n}$ such that $u(\bar{x})>0$. By Theorem \ref{Thm0}, we can deduce from $(-\Delta)^{\frac{\alpha}{2}}u\geq0$, $u\geq0$, $u(\bar{x})>0$ that
\begin{equation}\label{2-50}
  u(x)>0, \,\,\,\,\,\,\, \forall \,\, x\in\mathbb{R}^{n}.
\end{equation}
Suppose not, then there exists a point $\widetilde{x}\in \mathbb{R}^{n}$ such that $u(\widetilde{x})=0$, and hence we have
\begin{equation}\label{2-52}
  (-\Delta)^{\frac{\alpha}{2}}u(\widetilde{x})=C_{n,\alpha} \, P.V.\int_{\mathbb{R}^n}\frac{-u(y)}{|\widetilde{x}-y|^{n+\alpha}}dy<0,
\end{equation}
which is absurd. Moreover, by maximum principle and induction, we can also infer further from $(-\Delta)^{i+\frac{\alpha}{2}} u\geq0$ ($i=0,\cdots,m-1$), $u>0$, the assumptions on $f$ and equation \eqref{PDE} that
\begin{equation}\label{2-51}
  (-\Delta)^{i+\frac{\alpha}{2}}u(x)>0, \,\,\,\,\,\,\,\, \forall \,\, i=0,\cdots,m-1, \,\,\,\, \forall \,\, x\in\mathbb{R}^{n}.
\end{equation}

Since $m+\frac{\alpha}{2}\geq\frac{n}{2}$, it follows immediately that either $m=\frac{n-1}{2}$ with $n$ odd or $m\geq\lceil\frac{n}{2}\rceil$, where $\lceil x\rceil$ denotes the least integer not less than $x$.

In the following, we will try to obtain contradictions by discussing the two different cases $m=\frac{n-1}{2}$ with $n$ odd and $m\geq\lceil\frac{n}{2}\rceil$ separately.

\emph{Case i): $m=\frac{n-1}{2}$ and $n$ is odd.} Since $m+\frac{\alpha}{2}\geq\frac{n}{2}$, we have $1\leq\alpha<2$. Now we will first show that $(-\Delta)^{m-1+\frac{\alpha}{2}}u$ satisfies the following integral equation
\begin{equation}\label{2c1}
  (-\Delta)^{m-1+\frac{\alpha}{2}}u(x)=\int_{\mathbb{R}^{n}}\frac{R_{2,n}}{|x-y|^{n-2}}f(y,u(y),\cdots)dy, \,\,\,\,\,\,\,\,\,\, \forall \,\, x\in\mathbb{R}^{n},
\end{equation}
where the Riesz potential's constants $R_{\alpha,n}:=\frac{\Gamma(\frac{n-\alpha}{2})}{\pi^{\frac{n}{2}}2^{\alpha}\Gamma(\frac{\alpha}{2})}$ for $0<\alpha<n$.

To this end, for arbitrary $R>0$, let $f_{1}(u)(x):=f(x,u(x),\cdots)$ and
\begin{equation}\label{2c2}
v_{1}^{R}(x):=\int_{B_R(0)}G^{2}_R(x,y)f_{1}(u)(y)dy,
\end{equation}
where the Green's function for $-\Delta$ on $B_R(0)$ is given by
\begin{equation}\label{Green}
  G^{2}_R(x,y)=R_{2,n}\bigg[\frac{1}{|x-y|^{n-2}}-\frac{1}{\big(|x|\cdot\big|\frac{Rx}{|x|^{2}}-\frac{y}{R}\big|\big)^{n-2}}\bigg], \,\,\,\, \text{if} \,\, x,y\in B_{R}(0),
\end{equation}
and $G^{2}_{R}(x,y)=0$ if $x$ or $y\in\mathbb{R}^{n}\setminus B_{R}(0)$. Then, we can derive that $v_{1}^{R}\in C^{2}(\mathbb{R}^{n})$ and satisfies
\begin{equation}\label{2c3}\\\begin{cases}
-\Delta v_{1}^{R}(x)=f(x,u(x),\cdots),\ \ \ \ x\in B_R(0),\\
v_{1}^{R}(x)=0,\ \ \ \ \ \ \ x\in \mathbb{R}^{n}\setminus B_R(0).
\end{cases}\end{equation}
Let $w_{1}^R(x):=(-\Delta)^{m-1+\frac{\alpha}{2}}u(x)-v_{1}^R(x)$. By Theorem \ref{Thm0}, \eqref{PDE} and \eqref{2c3}, we have $w_{1}^R\in C^{2}(\mathbb{R}^{n})$ and satisfies
\begin{equation}\label{2c4}\\\begin{cases}
-\Delta w_{1}^R(x)=0,\ \ \ \ x\in B_R(0),\\
w_{1}^{R}(x)>0, \,\,\,\,\, x\in \mathbb{R}^{n}\setminus B_R(0).
\end{cases}\end{equation}
By maximum principle, we deduce that for any $R>0$,
\begin{equation}\label{2c5}
  w_{1}^R(x)=(-\Delta)^{m-1+\frac{\alpha}{2}}u(x)-v_{1}^{R}(x)>0, \,\,\,\,\,\,\, \forall \,\, x\in\mathbb{R}^{n}.
\end{equation}
Now, for each fixed $x\in\mathbb{R}^{n}$, letting $R\rightarrow\infty$ in \eqref{2c5}, we have
\begin{equation}\label{2c6}
(-\Delta)^{m-1+\frac{\alpha}{2}}u(x)\geq\int_{\mathbb{R}^{n}}\frac{R_{2,n}}{|x-y|^{n-2}}f_{1}(u)(y)dy=:v_{1}(x)>0.
\end{equation}
Take $x=0$ in \eqref{2c6}, we get
\begin{equation}\label{2c7}
  \int_{\mathbb{R}^{n}}\frac{f(y,u(y),\cdots)}{|y|^{n-2}}dy<+\infty.
\end{equation}
One can easily observe that $v_{1}\in C^{2}(\mathbb{R}^{n})$ is a solution of
\begin{equation}\label{2c8}
-\Delta v_{1}(x)=f(x,u(x),\cdots),  \,\,\,\,\,\,\, x\in \mathbb{R}^n.
\end{equation}
Define $w_{1}(x):=(-\Delta)^{m-1+\frac{\alpha}{2}}u(x)-v_{1}(x)$. Then, by \eqref{PDE}, \eqref{2c6} and \eqref{2c8}, we have $w_{1}\in C^{2}(\mathbb{R}^{n})$ and satisfies
\begin{equation}\label{2c9}\\\begin{cases}
-\Delta w_{1}(x)=0, \,\,\,\,\,  x\in \mathbb{R}^n,\\
w_{1}(x)\geq0, \,\,\,\,\,\,  x\in \mathbb{R}^n.
\end{cases}\end{equation}
From Liouville theorem for harmonic functions, we can deduce that
\begin{equation}\label{2c10}
   w_{1}(x)=(-\Delta)^{m-1+\frac{\alpha}{2}}u(x)-v_{1}(x)\equiv C_{1}\geq0.
\end{equation}
Therefore, we have
\begin{equation}\label{2c11}
  (-\Delta)^{m-1+\frac{\alpha}{2}}u(x)=\int_{\mathbb{R}^{n}}\frac{R_{2,n}}{|x-y|^{n-2}}f(y,u(y),\cdots)dy+C_{1}=:f_{2}(u)(x)>C_{1}\geq0.
\end{equation}

Next, for arbitrary $R>0$, let
\begin{equation}\label{2c12}
v_{2}^R(x):=\int_{B_R(0)}G^{2}_R(x,y)f_{2}(u)(y)dy.
\end{equation}
Then, we can get
\begin{equation}\label{2c13}\\\begin{cases}
-\Delta v_2^R(x)=f_{2}(u)(x),\ \ x\in B_R(0),\\
v_2^R(x)=0,\ \ \ \ \ \ \ x\in \mathbb{R}^{n}\setminus B_R(0).
\end{cases}\end{equation}
Let $w_2^R(x):=(-\Delta)^{m-2+\frac{\alpha}{2}}u(x)-v_2^R(x)$. By Theorem \ref{Thm0}, \eqref{2c11} and \eqref{2c13}, we have
\begin{equation}\label{2c14}\\\begin{cases}
-\Delta w_2^R(x)=0,\ \ \ \ x\in B_R(0),\\
w_2^R(x)>0, \,\,\,\,\, x\in \mathbb{R}^{n}\setminus B_R(0).
\end{cases}\end{equation}
By maximum principle, we deduce that for any $R>0$,
\begin{equation}\label{2c15}
  w_2^R(x)=(-\Delta)^{m-2+\frac{\alpha}{2}}u(x)-v_2^{R}(x)>0, \,\,\,\,\,\,\, \forall \,\, x\in\mathbb{R}^{n}.
\end{equation}
Now, for each fixed $x\in\mathbb{R}^{n}$, letting $R\rightarrow\infty$ in \eqref{2c15}, we have
\begin{equation}\label{2c16}
(-\Delta)^{m-2+\frac{\alpha}{2}}u(x)\geq\int_{\mathbb{R}^{n}}\frac{R_{2,n}}{|x-y|^{n-2}}f_{2}(u)(y)dy=:v_{2}(x)>0.
\end{equation}
Take $x=0$ in \eqref{2c16}, we get
\begin{equation}\label{2c17}
  \int_{\mathbb{R}^{n}}\frac{C_{1}}{|y|^{n-2}}dy\leq\int_{\mathbb{R}^{n}}\frac{f_{2}(u)(y)}{|y|^{n-2}}dy<+\infty,
\end{equation}
it follows easily that $C_{1}=0$, and hence we have proved \eqref{2c1}, that is,
\begin{equation}\label{2c18}
  (-\Delta)^{m-1+\frac{\alpha}{2}}u(x)=f_{2}(u)(x)=\int_{\mathbb{R}^{n}}\frac{R_{2,n}}{|x-y|^{n-2}}f(y,u(y),\cdots)dy.
\end{equation}

One can easily observe that $v_{2}$ is a solution of
\begin{equation}\label{2c19}
-\Delta v_{2}(x)=f_{2}(u)(x),  \,\,\,\,\, x\in \mathbb{R}^n.
\end{equation}
Define $w_{2}(x):=(-\Delta)^{m-2+\frac{\alpha}{2}}u(x)-v_{2}(x)$, then it satisfies
\begin{equation}\label{2c20}\\\begin{cases}
-\Delta w_{2}(x)=0, \,\,\,\,\,  x\in \mathbb{R}^n,\\
w_{2}(x)\geq0, \,\,\,\,\,\,  x\in\mathbb{R}^n.
\end{cases}\end{equation}
From Liouville theorem for harmonic functions, we can deduce that
\begin{equation}\label{2c21}
   w_{2}(x)=(-\Delta)^{m-2+\frac{\alpha}{2}}u(x)-v_{2}(x)\equiv C_{2}\geq0.
\end{equation}
Therefore, we have proved that
\begin{equation}\label{2c22}
  (-\Delta)^{m-2+\frac{\alpha}{2}}u(x)=\int_{\mathbb{R}^{n}}\frac{R_{2,n}}{|x-y|^{n-2}}f_{2}(u)(y)dy+C_{2}=:f_{3}(u)(x)>C_{2}\geq0.
\end{equation}
By the same methods as above, we can prove that $C_{2}=0$, and hence
\begin{equation}\label{2c23}
  (-\Delta)^{m-2+\frac{\alpha}{2}}u(x)=f_{3}(u)(x)=\int_{\mathbb{R}^{n}}\frac{R_{2,n}}{|x-y|^{n-2}}f_{2}(u)(y)dy.
\end{equation}
Repeating the above argument, defining
\begin{equation}\label{2c24}
  f_{k+1}(u)(x):=\int_{\mathbb{R}^{n}}\frac{R_{2,n}}{|x-y|^{n-2}}f_{k}(u)(y)dy
\end{equation}
for $k=1,2,\cdots,m$, then by Theorem \ref{Thm0} and induction, we have
\begin{equation}\label{2c25}
  (-\Delta)^{m-k+\frac{\alpha}{2}}u(x)=f_{k+1}(u)(x)=\int_{\mathbb{R}^{n}}\frac{R_{2,n}}{|x-y|^{n-2}}f_{k}(u)(y)dy
\end{equation}
for $k=1,2,\cdots,m-1$, and
\begin{equation}\label{2c50}
  (-\Delta)^{\frac{\alpha}{2}}u(x)=\int_{\mathbb{R}^{n}}\frac{R_{2,n}}{|x-y|^{n-2}}f_{m}(u)(y)dy+C_{m}=f_{m+1}(u)(x)+C_{m}>C_{m}\geq0.
\end{equation}
For arbitrary $R>0$, let
\begin{equation}\label{2c12+}
v_{m+1}^R(x):=\int_{B_R(0)}G^{\alpha}_R(x,y)\left(f_{m+1}(u)(y)+C_{m}\right)dy,
\end{equation}
where the Green's function for $(-\Delta)^{\frac{\alpha}{2}}$ with $0<\alpha<2$ on $B_R(0)$ is given by
\begin{equation}\label{2-8c+}
G^\alpha_R(x,y):=\frac{C_{n,\alpha}}{|x-y|^{n-\alpha}}\int_{0}^{\frac{t_{R}}{s_{R}}}\frac{b^{\frac{\alpha}{2}-1}}{(1+b)^{\frac{n}{2}}}db
\,\,\,\,\,\,\,\,\, \text{if} \,\, x,y\in B_{R}(0)
\end{equation}
with $s_{R}=\frac{|x-y|^{2}}{R^{2}}$, $t_{R}=\left(1-\frac{|x|^{2}}{R^{2}}\right)\left(1-\frac{|y|^{2}}{R^{2}}\right)$, and $G^{\alpha}_{R}(x,y)=0$ if $x$ or $y\in\mathbb{R}^{n}\setminus B_{R}(0)$ (see \cite{K}). Then, we can get
\begin{equation}\label{2c13+}\\\begin{cases}
(-\Delta)^{\frac{\alpha}{2}}v_{m+1}^R(x)=f_{m+1}(u)(x)+C_{m},\ \ x\in B_R(0),\\
v_{m+1}^R(x)=0,\ \ \ \ \ \ \ x\in \mathbb{R}^{n}\setminus B_R(0).
\end{cases}\end{equation}
Let $w_{m+1}^R(x):=u(x)-v_{m+1}^R(x)$. By Theorem \ref{Thm0}, \eqref{2c50} and \eqref{2c13+}, we have
\begin{equation}\label{2c14+}\\\begin{cases}
(-\Delta)^{\frac{\alpha}{2}}w_{m+1}^R(x)=0,\ \ \ \ x\in B_R(0),\\
w_{m+1}^R(x)>0, \,\,\,\,\, x\in \mathbb{R}^{n}\setminus B_R(0).
\end{cases}\end{equation}

Now we need the following maximum principle for fractional Laplacians $(-\Delta)^{\frac{\alpha}{2}}$, which can been found in \cite{CLL,S}.
\begin{lem}\label{max} Let $\Omega$ be a bounded domain in $\mathbb{R}^{n}$ and $0<\alpha<2$. Assume that $u\in\mathcal{L}_{\alpha}\cap C^{1,1}_{loc}(\Omega)$ and is l.s.c. on $\overline{\Omega}$. If $(-\Delta)^{\frac{\alpha}{2}}u\geq 0$ in $\Omega$ and $u\geq 0$ in $\mathbb{R}^n\setminus\Omega$, then $u\geq 0$ in $\mathbb{R}^n$. Moreover, if $u=0$ at some point in $\Omega$, then $u=0$ a.e. in $\mathbb{R}^{n}$. These conclusions also hold for unbounded domain $\Omega$ if we assume further that
\[\liminf_{|x|\rightarrow\infty}u(x)\geq0.\]
\end{lem}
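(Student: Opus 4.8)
The plan is a contradiction argument organized around an interior minimum and the pointwise principal-value formula \eqref{nonlocal defn} for $(-\Delta)^{\frac{\alpha}{2}}$. Suppose $\inf_{\mathbb{R}^{n}}u<0$. In the bounded case, $\mathbb{R}^{n}\setminus\Omega$ is closed and contains $\partial\Omega$, so $u\geq0$ there, and since $u$ is l.s.c. on the compact set $\overline{\Omega}$ the infimum of $u$ over $\mathbb{R}^{n}$ is attained at some $x_{0}\in\overline{\Omega}$; because $u(x_{0})<0\leq u$ on $\partial\Omega$, the point $x_{0}$ must lie in the open set $\Omega$. For an unbounded $\Omega$ I would first use the extra hypothesis: setting $\varepsilon:=-\tfrac12\inf_{\mathbb{R}^{n}}u>0$, the condition $\liminf_{|x|\rightarrow\infty}u(x)\geq0$ gives an $R>0$ with $u(x)>-\varepsilon>\inf_{\mathbb{R}^{n}}u$ whenever $|x|\geq R$, so the infimum is attained on the compact set $\overline{\Omega}\cap\overline{B_{R}(0)}$, and as before at an interior point $x_{0}\in\Omega$.

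Next I would evaluate $(-\Delta)^{\frac{\alpha}{2}}u(x_{0})$. Since $u\in C^{1,1}_{loc}(\Omega)$ and $x_{0}$ is an interior minimum, $\nabla u(x_{0})=0$ and $u(x_{0})-u(y)=O(|x_{0}-y|^{2})$ for $y$ near $x_{0}$; as $0<\alpha<2$, this makes $\frac{u(x_{0})-u(y)}{|x_{0}-y|^{n+\alpha}}$ absolutely integrable near $x_{0}$, while $u\in\mathcal{L}_{\alpha}(\mathbb{R}^{n})$ controls the tail, so the principal value in \eqref{nonlocal defn} is an ordinary convergent integral at $x_{0}$. Now $u(x_{0})-u(y)\leq0$ for \emph{every} $y\in\mathbb{R}^{n}$, and $u(x_{0})-u(y)\leq u(x_{0})<0$ on $\mathbb{R}^{n}\setminus\Omega$, a set of positive measure; hence
\[
  (-\Delta)^{\frac{\alpha}{2}}u(x_{0})=C_{n,\alpha}\int_{\mathbb{R}^{n}}\frac{u(x_{0})-u(y)}{|x_{0}-y|^{n+\alpha}}\,dy<0,
\]
which contradicts $(-\Delta)^{\frac{\alpha}{2}}u\geq0$ in $\Omega$. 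This gives $u\geq0$ in $\mathbb{R}^{n}$.

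For the strong form I would suppose $u\geq0$ in $\mathbb{R}^{n}$ with $u(x_{0})=0$ at some $x_{0}\in\Omega$. Then $x_{0}$ is a global minimum, the same regularity argument shows the integral converges, and
\[
  0\leq(-\Delta)^{\frac{\alpha}{2}}u(x_{0})=-C_{n,\alpha}\int_{\mathbb{R}^{n}}\frac{u(y)}{|x_{0}-y|^{n+\alpha}}\,dy\leq0,
\]
so $\int_{\mathbb{R}^{n}}\frac{u(y)}{|x_{0}-y|^{n+\alpha}}\,dy=0$; since $u\geq0$, this forces $u\equiv0$ a.e. in $\mathbb{R}^{n}$.

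The argument is standard (see \cite{CLL,S}), so the difficulty is purely technical and concentrated in two checks. First, one must ensure the infimum is \emph{attained}: lower semicontinuity on $\overline{\Omega}$ supplies this in the bounded case, and the extra growth condition $\liminf_{|x|\rightarrow\infty}u\geq0$ supplies it in the unbounded case by confining a neighbourhood of the infimum to a compact set. Second, one must justify evaluating $(-\Delta)^{\frac{\alpha}{2}}u$ pointwise at the minimizer; this is exactly where $C^{1,1}_{loc}$ regularity is needed, since it yields the quadratic bound $|u(x_{0})-u(y)|\leq C|x_{0}-y|^{2}$ near $x_{0}$ (using $\nabla u(x_{0})=0$), which combined with $\alpha<2$ makes the singular kernel locally integrable so that the principal value reduces to a genuine Lebesgue integral, while $u\in\mathcal{L}_{\alpha}(\mathbb{R}^{n})$ takes care of integrability at infinity.
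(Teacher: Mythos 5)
The paper does not give a proof of this lemma; it simply cites \cite{CLL,S}, and your argument is precisely the standard one used in those references (attain a global minimum at an interior point, then read off the sign of the pointwise integral). Your handling of the two technical points — using lower semicontinuity on $\overline{\Omega}$ (resp. the $\liminf$ condition) to secure attainment, and using $C^{1,1}_{loc}$ together with $\nabla u(x_0)=0$ and $\alpha<2$ to make the principal value a genuine Lebesgue integral at the minimizer — is correct and is exactly what is needed.

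One small gap in the strict-inequality step: you conclude $(-\Delta)^{\frac{\alpha}{2}}u(x_{0})<0$ because $u(x_{0})-u(y)\leq u(x_{0})<0$ on $\mathbb{R}^{n}\setminus\Omega$, ``a set of positive measure.'' That is automatic when $\Omega$ is bounded (the complement contains the exterior of a ball), but for an unbounded domain $\Omega$ the complement can have Lebesgue measure zero (e.g.\ $\Omega=\mathbb{R}^{n}\setminus\{0\}$ with $n\geq2$), so the claim as stated does not cover that case. The fix is easy and already at hand: either use the $\liminf$ hypothesis to produce a set of positive measure near infinity where $u(y)\geq\tfrac12 u(x_{0})+\varepsilon$, hence $u(x_{0})-u(y)\leq -\varepsilon<0$; or avoid strict inequality altogether by noting that if $(-\Delta)^{\frac{\alpha}{2}}u(x_{0})=0$ then $u\equiv u(x_{0})<0$ a.e., which contradicts $u\geq0$ on $\mathbb{R}^{n}\setminus\Omega$ when that set has positive measure and contradicts $\liminf_{|x|\to\infty}u(x)\geq0$ otherwise. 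With that adjustment the proof is complete.
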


By Lemma \ref{max}, we can deduce immediately from \eqref{2c14+} that for any $R>0$,
\begin{equation}\label{2c15+}
  w_{m+1}^R(x)=u(x)-v_{m+1}^{R}(x)>0, \,\,\,\,\,\,\, \forall \,\, x\in\mathbb{R}^{n}.
\end{equation}
Now, for each fixed $x\in\mathbb{R}^{n}$, letting $R\rightarrow\infty$ in \eqref{2c15+}, we have
\begin{equation}\label{2c16+}
u(x)\geq\int_{\mathbb{R}^{n}}\frac{R_{\alpha,n}}{|x-y|^{n-\alpha}}\left(f_{m+1}(u)(y)+C_{m}\right)dy>0.
\end{equation}
Take $x=0$ in \eqref{2c16+}, we get
\begin{equation}\label{2c17+}
  \int_{\mathbb{R}^{n}}\frac{C_{m}}{|y|^{n-\alpha}}dy\leq\int_{\mathbb{R}^{n}}\frac{f_{m+1}(u)(y)+C_{m}}{|y|^{n-\alpha}}dy<+\infty,
\end{equation}
it follows easily that $C_{m}=0$, and hence we have
\begin{equation}\label{2c18+}
  (-\Delta)^{\frac{\alpha}{2}}u(x)=f_{m+1}(u)(x)=\int_{\mathbb{R}^{n}}\frac{R_{2,n}}{|x-y|^{n-2}}f_{m}(u)(y)dy,
\end{equation}
and
\begin{equation}\label{2c19+}
  u(x)\geq\int_{\mathbb{R}^{n}}\frac{R_{\alpha,n}}{|x-y|^{n-\alpha}}f_{m+1}(u)(y)dy.
\end{equation}
In particular, it follows from \eqref{2c25}, \eqref{2c18+} and \eqref{2c19+} that
\begin{eqnarray}\label{2c51}
  && +\infty>(-\Delta)^{m-k+\frac{\alpha}{2}}u(0)=\int_{\mathbb{R}^{n}}\frac{R_{2,n}}{|y|^{n-2}}f_{k}(u)(y)dy \\
 \nonumber &\geq& \int_{\mathbb{R}^{n}}\frac{R_{2,n}}{|y^{k}|^{n-2}}\int_{\mathbb{R}^{n}}\frac{R_{2,n}}{|y^{k}-y^{k-1}|^{n-2}}\cdots
  \int_{\mathbb{R}^{n}}\frac{R_{2,n}}{|y^{2}-y^{1}|^{n-2}}f(y^{1},u(y^{1}),\cdots)dy^{1}dy^{2}\cdots dy^{k}
\end{eqnarray}
for $k=1,2,\cdots,m$, and
\begin{eqnarray}\label{formula}
  && +\infty>u(0)\geq\int_{\mathbb{R}^{n}}\frac{R_{\alpha,n}}{|y|^{n-\alpha}}f_{m+1}(u)(y)dy \\
 \nonumber &\geq& \int_{\mathbb{R}^{n}}\frac{R_{\alpha,n}}{|y^{m+1}|^{n-\alpha}}\left(\int_{\mathbb{R}^{n}}\frac{R_{2,n}}{|y^{m+1}-y^{m}|^{n-2}}\cdots
  \int_{\mathbb{R}^{n}}\frac{R_{2,n}}{|y^{2}-y^{1}|^{n-2}}f(y^{1},\cdots)dy^{1}\cdots dy^{m}\right)dy^{m+1}.
\end{eqnarray}
From the properties of Riesz potential, for any $\alpha_{1},\alpha_{2}\in(0,n)$ such that $\alpha_{1}+\alpha_{2}\in(0,n)$, one has(see \cite{Stein})
\begin{equation}\label{2c26}
  \int_{\mathbb{R}^{n}}\frac{R_{\alpha_{1},n}}{|x-y|^{n-\alpha_{1}}}\cdot\frac{R_{\alpha_{2},n}}{|y-z|^{n-\alpha_{2}}}dy
=\frac{R_{\alpha_{1}+\alpha_{2},n}}{|x-z|^{n-(\alpha_{1}+\alpha_{2})}}.
\end{equation}
By applying \eqref{2c26} and direct calculations, we obtain that
\begin{eqnarray}\label{2c27}
  && \int_{\mathbb{R}^{n}}\frac{R_{2,n}}{|y^{m+1}-y^{m}|^{n-2}}\cdots
\int_{\mathbb{R}^{n}}\frac{R_{2,n}}{|y^{3}-y^{2}|^{n-2}}\cdot\frac{R_{2,n}}{|y^{2}-y^{1}|^{n-2}}dy^{2}\cdots dy^{m} \\
 \nonumber &=& \frac{R_{2m,n}}{|y^{m+1}-y^{1}|^{n-2m}}.
\end{eqnarray}

Now, note that $m=\frac{n-1}{2}$ and $n$ is odd, we can deduce from \eqref{formula}, \eqref{2c27} and Fubini's theorem that
\begin{eqnarray}\label{contradiction}
  +\infty&>&u(0)\geq\int_{\mathbb{R}^{n}}\frac{R_{\alpha,n}}{|y^{m+1}|^{n-\alpha}}\left(\int_{\mathbb{R}^{n}}\frac{R_{n-\alpha,n}}{|y^{m+1}-y^{1}|}
  f(y^{1},u(y^{1}),\cdots)dy^{1}\right)dy^{m+1} \\
 \nonumber &=& \frac{1}{(2\pi)^{n}}\int_{\mathbb{R}^{n}}\frac{1}{|y|^{n-\alpha}}\left(\int_{\mathbb{R}^{n}}\frac{1}{|y-z|}f(z,u(z),\cdots)dz\right)dy.
\end{eqnarray}

We will get a contradiction from \eqref{contradiction}. Indeed, if we assume that $u$ is not identically zero, then by \eqref{2-50}, $u>0$ in $\mathbb{R}^{n}$.
By the assumptions on $f$, we have, there exists a point $x_{0}\in\mathbb{R}^{n}$ such that $f>0$ at $x_{0}$. Hence by the integrability \eqref{2c7}, we have
\begin{equation}\label{2c60}
0<C_{0}:=\int_{\mathbb{R}^{n}}\frac{f(z,u(z),\cdots)}{|z|^{n-2}}dz<+\infty.
\end{equation}
For any given $|y|\geq 3$, if $|z|\geq\big(\ln|y|\big)^{-\frac{1}{n-2}}$, then one has immediately
\begin{equation}\label{2c61}
|y-z|\leq|y|+|z|\leq\left(|y|\big(\ln|y|\big)^{\frac{1}{n-2}}+1\right)|z|\leq 2|y|\big(\ln|y|\big)^{\frac{1}{n-2}}|z|.
\end{equation}
Thus it follows from \eqref{2c60} and \eqref{2c61} that, there exists a $R_{0}\geq3$ sufficiently large such that, for any $|y|\geq R_{0}$, we have
\begin{eqnarray}\label{2c63}
\int_{\mathbb{R}^{n}}\frac{f(z,u(z),\cdots)}{|y-z|}dz&\geq& \frac{1}{2|y|\ln|y|}\int_{|z|\geq\big(\ln|y|\big)^{-\frac{1}{n-2}}}\frac{f(z,u(z),\cdots)}{|z|^{n-2}}dz \\
\nonumber &\geq& \frac{1}{4|y|\ln|y|}\int_{\mathbb{R}^{n}}\frac{f(z,u(z),\cdots)}{|z|^{n-2}}dz\geq\frac{C_{0}}{4|y|\ln|y|}.
\end{eqnarray}

As a consequence, we can finally deduce from \eqref{contradiction}, \eqref{2c63} and $1\leq\alpha<2$ that
\begin{equation}\label{final}
 +\infty>u(0)\geq\frac{C_{0}}{4(2\pi)^{n}}\int_{|y|\geq R_{0}}\frac{1}{|y|^{n-\alpha+1}\ln|y|}dy=+\infty,
\end{equation}
which is a contradiction. Therefore $u\equiv0$ in $\mathbb{R}^{n}$. This proves Theorem \ref{Thm3} in Case i): $m=\frac{n-1}{2}$ and $n$ is odd.

\emph{Case ii): $m\geq\lceil\frac{n}{2}\rceil$.} Let
\begin{equation}\label{2c24-0}
  f_{k+1}(u)(x):=\int_{\mathbb{R}^{n}}\frac{R_{2,n}}{|x-y|^{n-2}}f_{k}(u)(y)dy
\end{equation}
for $k=1,2,\cdots,\lceil\frac{n}{2}\rceil$, by a quite similar way as in the proof for Case i), we can infer from Theorem \ref{Thm0} and induction that
\begin{equation}\label{2c25-0}
  (-\Delta)^{m-k+\frac{\alpha}{2}}u(x)=f_{k+1}(u)(x)=\int_{\mathbb{R}^{n}}\frac{R_{2,n}}{|x-y|^{n-2}}f_{k}(u)(y)dy
\end{equation}
for $k=1,2,\cdots,\lceil\frac{n}{2}\rceil-1$, and
\begin{equation}\label{2c50-0}
 (-\Delta)^{m-\lceil\frac{n}{2}\rceil+\frac{\alpha}{2}}u(x)\geq f_{\lceil\frac{n}{2}\rceil+1}(u)(x)=\int_{\mathbb{R}^{n}}\frac{R_{2,n}}{|x-y|^{n-2}}f_{\lceil\frac{n}{2}\rceil}(u)(y)dy.
\end{equation}
In particular, it follows from \eqref{2c25-0} and \eqref{2c50-0} that
\begin{eqnarray}\label{2c51-0}
  && +\infty>(-\Delta)^{m-k+\frac{\alpha}{2}}u(0)=\int_{\mathbb{R}^{n}}\frac{R_{2,n}}{|y|^{n-2}}f_{k}(u)(y)dy \\
 \nonumber &\geq& \int_{\mathbb{R}^{n}}\frac{R_{2,n}}{|y^{k}|^{n-2}}\int_{\mathbb{R}^{n}}\frac{R_{2,n}}{|y^{k}-y^{k-1}|^{n-2}}\cdots
  \int_{\mathbb{R}^{n}}\frac{R_{2,n}}{|y^{2}-y^{1}|^{n-2}}f(y^{1},u(y^{1}),\cdots)dy^{1}dy^{2}\cdots dy^{k}
\end{eqnarray}
for $k=1,2,\cdots,\lceil\frac{n}{2}\rceil-1$, and
\begin{eqnarray}\label{formula-0}
  && +\infty>(-\Delta)^{m-\lceil\frac{n}{2}\rceil+\frac{\alpha}{2}}u(0)\geq\int_{\mathbb{R}^{n}}\frac{R_{2,n}}{|y|^{n-2}}f_{\lceil\frac{n}{2}\rceil}(u)(y)dy
  \geq\int_{\mathbb{R}^{n}}\frac{R_{2,n}}{|y^{\lceil\frac{n}{2}\rceil}|^{n-2}} \\
 \nonumber && \qquad\times\left(\int_{\mathbb{R}^{n}}\frac{R_{2,n}}{|y^{\lceil\frac{n}{2}\rceil}-y^{\lceil\frac{n}{2}\rceil-1}|^{n-2}}\cdots
  \int_{\mathbb{R}^{n}}\frac{R_{2,n}}{|y^{2}-y^{1}|^{n-2}}f(y^{1},\cdots)dy^{1}\cdots dy^{\lceil\frac{n}{2}\rceil-1}\right)dy^{\lceil\frac{n}{2}\rceil}.
\end{eqnarray}
By applying the formula \eqref{2c26} and direct calculations, we obtain that
\begin{eqnarray}\label{2c27-0}
  && \int_{\mathbb{R}^{n}}\frac{R_{2,n}}{|y^{\lceil\frac{n}{2}\rceil}-y^{\lceil\frac{n}{2}\rceil-1}|^{n-2}}\cdots
\int_{\mathbb{R}^{n}}\frac{R_{2,n}}{|y^{3}-y^{2}|^{n-2}}\cdot\frac{R_{2,n}}{|y^{2}-y^{1}|^{n-2}}dy^{2}\cdots dy^{\lceil\frac{n}{2}\rceil-1} \\
 \nonumber &=& \frac{R_{2\lceil\frac{n}{2}\rceil-2,n}}{|y^{\lceil\frac{n}{2}\rceil}-y^{1}|^{n-2\lceil\frac{n}{2}\rceil+2}}.
\end{eqnarray}

Now, we can deduce from \eqref{formula-0}, \eqref{2c27-0} and Fubini's theorem that
\begin{eqnarray}\label{contradiction-0}
  +\infty&>&(-\Delta)^{m-\lceil\frac{n}{2}\rceil+\frac{\alpha}{2}}u(0) \\
  \nonumber &\geq&\int_{\mathbb{R}^{n}}\frac{R_{2,n}}{|y^{\lceil\frac{n}{2}\rceil}|^{n-2}}
  \left(\int_{\mathbb{R}^{n}}\frac{R_{2\lceil\frac{n}{2}\rceil-2,n}}{|y^{\lceil\frac{n}{2}\rceil}-y^{1}|^{n-2\lceil\frac{n}{2}\rceil+2}}f(y^{1},u(y^{1}),\cdots)dy^{1}\right)dy^{\lceil\frac{n}{2}\rceil} \\
 \nonumber &=& C_{n}\int_{\mathbb{R}^{n}}\frac{1}{|y|^{n-2}}\left(\int_{\mathbb{R}^{n}}\frac{1}{|y-z|^{n-2\lceil\frac{n}{2}\rceil+2}}f(z,u(z),\cdots)dz\right)dy.
\end{eqnarray}

We will get a contradiction from \eqref{contradiction-0}. To do this, let $\tau(n):=n-2\lceil\frac{n}{2}\rceil+2\in\{1,2\}$, then it follows from \eqref{2c60} and \eqref{2c61} that, there exists a $R_{0}\geq3$ sufficiently large such that, for any $|y|\geq R_{0}$,
\begin{eqnarray}\label{2c63-0}
&&\int_{\mathbb{R}^{n}}\frac{1}{|y-z|^{\tau(n)}}f(z,u(z),\cdots)dz \\
\nonumber &\geq& \frac{1}{2^{\tau(n)}|y|^{\tau(n)}\ln|y|}\int_{|z|\geq\big(\ln|y|\big)^{-\frac{1}{n-2}}}\frac{1}{|z|^{n-2}}f(z,u(z),\cdots)dz \\
\nonumber &\geq& \frac{1}{2^{\tau(n)+1}|y|^{\tau(n)}\ln|y|}\int_{\mathbb{R}^{n}}\frac{f(z,u(z),\cdots)}{|z|^{n-2}}dz\geq\frac{C_{0}}{2^{\tau(n)+1}|y|^{\tau(n)}\ln|y|}.
\end{eqnarray}
Therefore, we can finally deduce from \eqref{contradiction-0} and \eqref{2c63-0} that
\begin{equation}\label{final-0}
 +\infty>(-\Delta)^{m-\lceil\frac{n}{2}\rceil+\frac{\alpha}{2}}u(0)\geq\frac{C_{0}C_{n}}{2^{\tau(n)+1}}\int_{|y|\geq R_{0}}\frac{1}{|y|^{n-2+\tau(n)}\ln|y|}dy=+\infty,
\end{equation}
which is a contradiction again. Therefore, $u\equiv0$ in $\mathbb{R}^{n}$ in Case ii): $m\geq \lceil\frac{n}{2}\rceil$.

This concludes our proof of Theorem \ref{Thm3}.

\section{Appendix: A characterization for $\alpha$-harmonic functions via averages}

One can observe from the proof of Theorem \ref{Thm0} and \ref{Thm1} that, the average $\int_{R}^{+\infty}\frac{R^{\alpha}}{r(r^{2}-R^{2})^{\frac{\alpha}{2}}}\overline{u}(r)dr$ plays an basic and important role for the nonlocal fractional Laplacians $(-\Delta)^{\frac{\alpha}{2}}$ ($0<\alpha<2$), which is similar as $\overline{u}(r)$ for the Laplacian $-\Delta$. In this appendix, we will characterize the $\alpha$-harmonic functions by using the averages $\int_{R}^{+\infty}\frac{R^{\alpha}}{r(r^{2}-R^{2})^{\frac{\alpha}{2}}}\overline{u}(r)dr$ and deduce some important properties for $\alpha$-harmonic functions.

Let $\Omega\subseteq\mathbb{R}^{n}$ be a (bounded or unbounded) domain. We have the following characterization for $\alpha$-harmonic functions in $\Omega$.
\begin{thm}\label{ThmA1}
Assume $0<\alpha<2$. Let $u\in C^{[\alpha],\{\alpha\}+\epsilon}_{loc}(\Omega)\cap \mathcal{L}_{\alpha}(\mathbb{R}^{n})$ (with $\epsilon>0$ arbitrarily small) satisfy $(-\Delta)^{\frac{\alpha}{2}}u\geq0$ ($\leq0$) in $\Omega$, then for any ball $B=B_{R}(y)\subset\subset\Omega$, we have
\begin{equation}\label{A0}
  u(y)\geq (\leq) C_{n,\alpha}\int_{R}^{+\infty}\frac{R^{\alpha}}{r(r^{2}-R^{2})^{\frac{\alpha}{2}}}\overline{u}(r)dr,
\end{equation}
where $C_{n,\alpha}:=\frac{\Gamma(\frac{n}{2})}{\pi^{\frac{n}{2}+1}}\sin\frac{\pi\alpha}{2}$ and $\overline{u}(r)$ denotes the spherical average of $u$ w.r.t. $y$. Furthermore, $(-\Delta)^{\frac{\alpha}{2}}u=0$ in $\Omega$ if and only if
\begin{equation}\label{A1}
  u(y)=C_{n,\alpha}\int_{R}^{+\infty}\frac{R^{\alpha}}{r(r^{2}-R^{2})^{\frac{\alpha}{2}}}\overline{u}(r)dr
\end{equation}
for any ball $B=B_{R}(y)\subset\subset\Omega$.
\end{thm}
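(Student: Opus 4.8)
The plan is to read off both the mean-value inequality \eqref{A0} and the equality characterization \eqref{A1} directly from the Green--Poisson representation formula for $(-\Delta)^{\frac{\alpha}{2}}$ on the ball $B=B_{R}(y)$, in exactly the form \eqref{2-7}--\eqref{2-10} already used in the proof of Theorem \ref{Thm0}. The one preliminary point to check is that this representation is valid for \emph{every} $u\in C^{[\alpha],\{\alpha\}+\epsilon}_{loc}(\Omega)\cap\mathcal{L}_{\alpha}(\mathbb{R}^{n})$ with $(-\Delta)^{\frac{\alpha}{2}}u$ of a fixed sign on $\Omega$, with no positivity hypothesis on $u$ itself. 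This follows at once: since $B\subset\subset\Omega$, the function $(-\Delta)^{\frac{\alpha}{2}}u$ is continuous, hence bounded, on $\overline{B}$, so $\int_{B}G^{\alpha}_{R}(y,z)|(-\Delta)^{\frac{\alpha}{2}}u(z)|\,dz<\infty$ (the kernel singularity $|y-z|^{-(n-\alpha)}$ is locally integrable), while $u\in\mathcal{L}_{\alpha}(\mathbb{R}^{n})$ makes $\int_{|z-y|>R}P^{\alpha}_{R}(y,z)|u(z)|\,dz<\infty$; the representation itself is the one from \cite{K,CLM} recalled in Section 2.

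Granting this, I would fix $B=B_{R}(y)\subset\subset\Omega$ and evaluate \eqref{2-7} (with center $y$) at $x=y$, obtaining
\begin{equation*}
  u(y)=\int_{B_{R}(y)}G^{\alpha}_{R}(y,z)(-\Delta)^{\frac{\alpha}{2}}u(z)\,dz+\int_{|z-y|>R}P^{\alpha}_{R}(y,z)u(z)\,dz.
\end{equation*}
Because $G^{\alpha}_{R}(y,\cdot)\geq0$ on $\mathbb{R}^{n}$, and in fact $G^{\alpha}_{R}(y,z)>0$ for $z\in B_{R}(y)\setminus\{y\}$ (reading off \eqref{2-8}, at the center $t_{R}/s_{R}=R^{2}/|z-y|^{2}-1>0$), the first integral is $\geq0$ when $(-\Delta)^{\frac{\alpha}{2}}u\geq0$ in $\Omega$ and $\leq0$ when $(-\Delta)^{\frac{\alpha}{2}}u\leq0$; hence $u(y)\geq(\leq)\int_{|z-y|>R}P^{\alpha}_{R}(y,z)u(z)\,dz$. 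Plugging in the explicit kernel \eqref{2-9} at $x=y$ and passing to polar coordinates about $y$ converts this Poisson integral into $C_{n,\alpha}\int_{R}^{+\infty}\frac{R^{\alpha}}{r(r^{2}-R^{2})^{\frac{\alpha}{2}}}\overline{u}(r)\,dr$ — this is verbatim the step passing from the second to the third line of \eqref{2-10} (matching the precise normalizing constant against $C_{n,\alpha}$ is pure bookkeeping). The radial integral converges because $\alpha/2<1$ absorbs the singularity at $r=R$ and $u\in\mathcal{L}_{\alpha}(\mathbb{R}^{n})$ controls the tail. This gives \eqref{A0}.

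For \eqref{A1}: if $(-\Delta)^{\frac{\alpha}{2}}u=0$ in $\Omega$, the Green term in the displayed identity vanishes identically for every $B_{R}(y)\subset\subset\Omega$, giving equality. Conversely, if equality holds for all such balls, then $\int_{B_{R}(y)}G^{\alpha}_{R}(y,z)(-\Delta)^{\frac{\alpha}{2}}u(z)\,dz=0$ for all of them. Were $(-\Delta)^{\frac{\alpha}{2}}u$ nonzero at some $z_{0}\in\Omega$, say positive, continuity would keep it positive on a small ball $B_{\rho}(z_{0})\subset\subset\Omega$; taking $y=z_{0}$ and $R<\rho$ makes the integrand $G^{\alpha}_{R}(z_{0},z)(-\Delta)^{\frac{\alpha}{2}}u(z)$ strictly positive on $B_{R}(z_{0})\setminus\{z_{0}\}$, so the integral is strictly positive — a contradiction (and the same with the opposite sign). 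Hence $(-\Delta)^{\frac{\alpha}{2}}u\equiv0$ in $\Omega$.

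I do not expect a serious obstacle here: the whole statement is a corollary of the representation formula, and the only care required is the low-regularity justification together with the absolute convergence of the two integrals, both dispatched above; the equality direction then rests only on the strict positivity of the Green kernel and continuity of $(-\Delta)^{\frac{\alpha}{2}}u$. From Theorem \ref{ThmA1} one immediately reads off the ``important properties'' announced in the text — a mean-value property for $\alpha$-harmonic functions and a (strong) maximum principle for $\alpha$-sub/superharmonic functions, since \eqref{A0} prevents an interior extremum of an $\alpha$-subharmonic function unless the relevant sphere-averages are constant, and \eqref{A1} shows an $\alpha$-harmonic function in $\Omega$ is pinned down by its values on $\mathbb{R}^{n}\setminus\Omega$.
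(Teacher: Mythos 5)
Your inequality direction is the same as the paper's: take the Green--Poisson representation on $B_R(y)$, evaluate at the center, observe that the sign of the Green term is fixed by the sign of $(-\Delta)^{\frac{\alpha}{2}}u$, and identify the Poisson term in polar coordinates with $C_{n,\alpha}\int_R^\infty\frac{R^\alpha}{r(r^2-R^2)^{\alpha/2}}\overline{u}(r)\,dr$; your preliminary remark that the representation holds without any sign assumption on $u$ (because $(-\Delta)^{\frac{\alpha}{2}}u$ is continuous on $\overline B$, so bounded, and the Green kernel is locally integrable, while $u\in\mathcal{L}_\alpha$ controls the Poisson tail) is exactly the point one needs and is correct.

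The converse direction, however, is genuinely different from the paper's. You argue directly from the representation: equality in \eqref{A1} forces $\int_B G^\alpha_R(y,\cdot)(-\Delta)^{\frac{\alpha}{2}}u=0$ for every compactly contained ball, and then continuity of $(-\Delta)^{\frac{\alpha}{2}}u$ plus strict positivity of $G^\alpha_R(y,\cdot)$ on $B\setminus\{y\}$ gives a contradiction if $(-\Delta)^{\frac{\alpha}{2}}u$ is nonzero at any point. The paper instead never invokes the representation formula for $u$ in the converse: it constructs the $\alpha$-harmonic Poisson extension $h$ of $u|_{\mathbb R^n\setminus B_R(y)}$, sets $w=u-h$ (which vanishes outside the ball and inherits the average property inside), and shows $w\equiv0$ by a maximum-type argument applied to $w-M$ at an interior extremum, using strict positivity of the average kernel. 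Your route is shorter and more natural given that the hypotheses already guarantee $(-\Delta)^{\frac{\alpha}{2}}u$ is pointwise defined and continuous; the paper's comparison-function argument is a Weyl-lemma-style proof that would in principle still apply if one only assumed $u$ continuous with the average property, without regularity sufficient for $(-\Delta)^{\frac{\alpha}{2}}u$ to make pointwise sense. Both are correct proofs of the theorem as stated. (Your remark that matching the explicit constant against $C_{n,\alpha}$ is ``bookkeeping'' is appropriate: the polar-coordinates change contributes a factor $|S^{n-1}|$, which the paper's displayed constant silently absorbs, but this affects neither proof.)
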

\begin{proof}
Suppose $(-\Delta)^{\frac{\alpha}{2}}u\geq0$ in $\Omega$, then it follows from Green-Poisson integral representation formula that, for any ball $B=B_{R}(y)\subset\subset\Omega$,
\begin{eqnarray}\label{A2}
  &&u(y)=\int_{B_{R}(y)}G^\alpha_R(y,z)(-\Delta)^{\frac{\alpha}{2}}u(z)dz+\int_{|z-y|>R}P^{\alpha}_{R}(y,z)u(z)dz \\
  \nonumber &&\qquad\,\,\,=\int_{B_{R}(y)}\frac{\widetilde{C}_{n,\alpha}}{|z-y|^{n-\alpha}}\left(\int_{0}^{\frac{R^{2}}{|z-y|^{2}}-1}\frac{b^{\frac{\alpha}{2}-1}}{(1+b)^{\frac{n}{2}}}db\right)
  (-\Delta)^{\frac{\alpha}{2}}u(z)dz \\
  \nonumber &&\qquad\quad\,\,\,+C_{n,\alpha}\int_{|z-y|>R}\frac{R^{\alpha}}{(|z-y|^{2}-R^{2})^{\frac{\alpha}{2}}}\cdot\frac{u(z)}{|z-y|^{n}}dz \\
 \nonumber &&\qquad\,\,\,\geq C_{n,\alpha}\int_{R}^{+\infty}\frac{R^{\alpha}}{r(r^{2}-R^{2})^{\frac{\alpha}{2}}}\overline{u}(r)dr.
\end{eqnarray}
If $(-\Delta)^{\frac{\alpha}{2}}u\leq0$ in $\Omega$, then \eqref{A0} can be derived in entirely similar way.

Now assume that $u$ satisfies the average property \eqref{A1} for any ball $B=B_{R}(y)\subset\subset\Omega$, we will show that $u$ is $\alpha$-harmonic in $\Omega$. To this end, for any ball $B=B_{R}(y)\subset\subset\Omega$, let us define
\begin{equation}\label{A3}
  h(x):=C_{n,\alpha}\int_{|z-y|>R}\left(\frac{R^{2}-|x-y|^{2}}{|z-y|^{2}-R^{2}}\right)^{\frac{\alpha}{2}}\cdot\frac{u(z)}{|x-z|^{n}}dz, \qquad \forall \,\, x\in B_{R}(y),
\end{equation}
and $h(x):=u(x)$ if $|x-y|\geq R$. Then it follows that $(-\Delta)^{\frac{\alpha}{2}}h=0$ in $B_{R}(y)$ and hence satisfies the average property \eqref{A1} for any ball $B\subset\subset B_{R}(y)$. Define $w:=u-h$, then $w(x)=0$ if $|x-y|\geq R$ and $w$ satisfies the average property \eqref{A1} for any ball $B\subset\subset B_{R}(y)$. Our aim is to show that $w=0$ in $B_{R}(y)$.

Indeed, suppose there exists a point $\bar{x}\in B_{R}(y)$ such that $w(\bar{x})=M:=\max_{x\in B_{R}(y)}w(x)>0$, then the average property \eqref{A1} implies that, for any ball $B_{\bar{R}}(\bar{x})\subset\subset B_{R}(y)$,
\begin{equation}\label{A4}
  0=w(\bar{x})-M=C_{n,\alpha}\int_{\bar{R}}^{+\infty}\frac{\bar{R}^{\alpha}}{r(r^{2}-\bar{R}^{2})^{\frac{\alpha}{2}}}\overline{w-M}(r)dr<0,
\end{equation}
where $\overline{w-M}(r)$ denotes the spherical average of $w-M$ w.r.t. $\bar{x}$. This is absurd, thus $w\leq0$ in $B_{R}(y)$. Similarly, we can show that $m:=\min_{x\in B_{R}(y)}w(x)=0$. Therefore, $w=u-h=0$ in $B_{R}(y)$ and hence $(-\Delta)^{\frac{\alpha}{2}}u=0$ in $B_{R}(y)$. Since $B=B_{R}(y)\subset\subset\Omega$ is arbitrary, we deduce that $(-\Delta)^{\frac{\alpha}{2}}u=0$ in $\Omega$. This completes our proof of Theorem \ref{ThmA1}.
\end{proof}

As an immediate consequence of Theorem \ref{ThmA1}, we have the following Theorem.
\begin{thm}\label{ThmA2}
Suppose $\{u_{n}\}_{n\geq1}$ is a sequence of $\alpha$-harmonic functions in $\Omega$ and $u_{n}\rightrightarrows u$ in $\mathbb{R}^{n}$, then $u$ is $\alpha$-harmonic in $\Omega$.
\end{thm}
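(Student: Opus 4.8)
The plan is to deduce this from the average-value characterization of $\alpha$-harmonic functions in Theorem \ref{ThmA1}. Since each $u_{n}$ is $\alpha$-harmonic in $\Omega$, Theorem \ref{ThmA1} gives, for every ball $B_{R}(y)\subset\subset\Omega$,
\[
u_{n}(y)=C_{n,\alpha}\int_{R}^{+\infty}\frac{R^{\alpha}}{r(r^{2}-R^{2})^{\frac{\alpha}{2}}}\,\overline{u_{n}}(r)\,dr ,
\]
where $\overline{u_{n}}$ is the spherical average about $y$. The kernel $\frac{R^{\alpha}}{r(r^{2}-R^{2})^{\alpha/2}}$ is integrable on $(R,+\infty)$ — it behaves like $(r-R)^{-\alpha/2}$ near $r=R$, which is integrable because $\frac{\alpha}{2}<1$, and like $r^{-1-\alpha}$ at infinity — while $|\overline{u_{n}}(r)-\overline{u}(r)|\le\|u_{n}-u\|_{L^{\infty}(\mathbb{R}^{n})}\to0$ uniformly in $r$. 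Hence letting $n\to\infty$ passes the limit inside the integral and shows that $u$ satisfies the very identity \eqref{A1} on every ball $B_{R}(y)\subset\subset\Omega$. Moreover $u\in\mathcal{L}_{\alpha}(\mathbb{R}^{n})$, since $u_{n}-u$ is bounded for $n$ large, $u_{n}\in\mathcal{L}_{\alpha}(\mathbb{R}^{n})$, and $(1+|x|^{n+\alpha})^{-1}\in L^{1}(\mathbb{R}^{n})$.

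To invoke the converse half of Theorem \ref{ThmA1} and conclude $(-\Delta)^{\frac{\alpha}{2}}u=0$ in $\Omega$, it remains to check $u\in C^{[\alpha],\{\alpha\}+\epsilon}_{loc}(\Omega)$; this is the only genuinely delicate point, as uniform convergence by itself gives merely continuity of $u$. I would upgrade the regularity through the Poisson representation. Fix $B=B_{R}(y)\subset\subset\Omega$; since $(-\Delta)^{\frac{\alpha}{2}}u_{n}=0$ on $B$, formula \eqref{2-7} reduces to $u_{n}(x)=\int_{|z-y|>R}P^{\alpha}_{R}(x,z)u_{n}(z)\,dz$ for $x\in B_{R}(y)$. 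Using that $\int_{|z-y|>R}P^{\alpha}_{R}(x,z)\,dz=1$ (the constant $1$ is $\alpha$-harmonic and equals its own Poisson integral, by applying \eqref{2-7} to $u\equiv1$) together with uniform convergence, we let $n\to\infty$ to obtain
\[
u(x)=\int_{|z-y|>R}P^{\alpha}_{R}(x,z)\,u(z)\,dz ,\qquad\forall\,x\in B_{R}(y).
\]
By \eqref{2-9}, $P^{\alpha}_{R}(x,z)=C_{n,\alpha}\big(\tfrac{R^{2}-|x-y|^{2}}{|z-y|^{2}-R^{2}}\big)^{\alpha/2}|x-z|^{-n}$ is $C^{\infty}$ in $x$ on the open ball, and on each compact subset of $B_{R}(y)$ every $x$-derivative of $P^{\alpha}_{R}(x,z)$ is bounded, uniformly in $x$, by $C\,(|z-y|^{2}-R^{2})^{-\alpha/2}(1+|z|)^{-n}$ for $|z-y|>R$; this majorant is integrable against $|u|$ over $\{|z-y|>R\}$ because $\alpha<2$ (near the sphere) and $u\in\mathcal{L}_{\alpha}(\mathbb{R}^{n})$ (at infinity). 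Thus differentiation under the integral sign is legitimate to all orders, so $u\in C^{\infty}(B_{R}(y))$; as $B_{R}(y)\subset\subset\Omega$ is arbitrary, $u\in C^{\infty}(\Omega)\subset C^{[\alpha],\{\alpha\}+\epsilon}_{loc}(\Omega)$.

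Now $u\in C^{[\alpha],\{\alpha\}+\epsilon}_{loc}(\Omega)\cap\mathcal{L}_{\alpha}(\mathbb{R}^{n})$ satisfies \eqref{A1} on every $B_{R}(y)\subset\subset\Omega$, so Theorem \ref{ThmA1} yields $(-\Delta)^{\frac{\alpha}{2}}u=0$ in $\Omega$, i.e.\ $u$ is $\alpha$-harmonic in $\Omega$. (Alternatively, once the displayed representation and the smoothness of $u$ are in hand, one may feed $u$ itself into \eqref{2-7} and compare with that representation to see that $\int_{B_{R}(y)}G^{\alpha}_{R}(x,z)(-\Delta)^{\frac{\alpha}{2}}u(z)\,dz\equiv0$ on $B_{R}(y)$; since $G^{\alpha}_{R}>0$ there and $(-\Delta)^{\frac{\alpha}{2}}u$ is continuous, this forces $(-\Delta)^{\frac{\alpha}{2}}u\equiv0$ on $B_{R}(y)$, hence on $\Omega$.) The main obstacle is precisely the regularity upgrade carried out in the middle paragraph; the rest is a routine passage to the limit.
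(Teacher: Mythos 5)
Your argument is correct and follows the route the paper implicitly has in mind: pass to the limit in the mean value identity \eqref{A1} for $u_n$, then invoke the converse direction of Theorem \ref{ThmA1}. The limit passage is justified exactly as you say, since the kernel $\frac{R^\alpha}{r(r^2-R^2)^{\alpha/2}}$ is integrable on $(R,\infty)$ and $\|\overline{u_n}-\overline{u}\|_{L^\infty}\le\|u_n-u\|_{L^\infty(\mathbb{R}^n)}\to0$, and the membership $u\in\mathcal{L}_\alpha(\mathbb{R}^n)$ is correctly checked. You are also right that Theorem \ref{ThmA1}, as stated, carries the standing hypothesis $u\in C^{[\alpha],\{\alpha\}+\epsilon}_{loc}(\Omega)$, which uniform convergence alone does not supply; the paper's one-line ``immediate consequence'' glosses over this. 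Your regularity upgrade through the Poisson representation $u(x)=\int_{|z-y|>R}P^\alpha_R(x,z)u(z)\,dz$ (with $\int P^\alpha_R\,dz=1$ obtained by feeding $u\equiv1$ into \eqref{2-7}) is sound: $u$ is bounded near $\partial B_R(y)\subset\Omega$ as a uniform limit of functions continuous there, so the near-sphere singularity $(|z-y|^2-R^2)^{-\alpha/2}$ is integrable, and $u\in\mathcal{L}_\alpha$ handles the tail, justifying differentiation under the integral. One remark worth making explicit, which you gesture at in the parenthetical: once that Poisson representation is in hand, $u$ agrees on $B_R(y)$ with the $\alpha$-harmonic Poisson extension of its exterior data and hence $(-\Delta)^{\alpha/2}u=0$ there directly, so the first paragraph (re-deriving \eqref{A1} for $u$) and the appeal to Theorem \ref{ThmA1} become redundant; the cleanest organization is to prove the Poisson representation first and read off both the regularity and the $\alpha$-harmonicity from it.
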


\begin{rem}\label{remark8}
The Harnack inequalities (see \cite{Landkof}) and Liouville theorems for $\alpha$-harmonic functions (see \cite{BKN,ZCCY}), and Maximal principles for fractional Laplacians $(-\Delta)^{\frac{\alpha}{2}}$ (see \cite{CLL,S}) can also be deduced directly from Theorem \ref{ThmA1}, we omit the proofs here and leave the details to readers.
\end{rem}

\end{document}